\newtheorem{tm}{tm}[section]
\newtheorem{theorem}[tm]{Theorem}
\newtheorem{corollary}[tm]{Corollary}
\newtheorem{proposition}[tm]{Proposition}
\newtheorem{definition}[tm]{Definition}
\newcommand{\process}[1]{\{#1_t\}_{t\geq0}}
\newcommand {\R} {\ensuremath{\mathbb{R}}}
\newcommand {\ZZ} {\ensuremath{\mathbb{Z}}}
\newcommand {\N} {\ensuremath{\mathbb{N}}}
\numberwithin{equation}{section}
\def\be{\begin{equation}}
\def\ee{\end{equation}}
\begin{document}

 \title{A transience condition for a class of one-dimensional
symmetric\\ L\'evy processes}
 \author{Nikola Sandri\'{c}\\ Department of Mathematics\\
         Faculty of Civil Engineering, University of Zagreb, Zagreb,
         Croatia \\
        Email: nsandric@grad.hr }

 \maketitle
\begin{center}
{
\medskip

} \end{center}

\begin{abstract}
In this paper, we give a sufficient condition for transience for a
class of one-dimensional symmetric L\'evy processes. More precisely,
we prove that   a one-dimensional symmetric L\'evy process with the
L\'evy measure $\nu(dy)=f(y)dy$ or $\nu(\{n\})=p_n$,  where the
density function $f(y)$ is such that  $f(y)>0$  a.e. and the
sequence $\{p_n\}_{n\geq1}$ is such that $p_n>0$ for all $n\geq1$,
is transient if
$$\int_1^{\infty}\frac{dy}{y^{3}f(y)}<\infty\quad\textrm{or}\quad \sum_{n=1}^{\infty}\frac{1}{n^{3}p_n}<\infty.$$  Similarly, we derive an analogous
transience condition for one-dimensional symmetric random walks with
continuous and discrete jumps.

\end{abstract}

{\small \textbf{Keywords and phrases:} characteristics of a
semimartingale, electrical network, L\'evy measure, L\'evy process,
random walk, recurrence,
 transience}

%
%
%
%


\section{Introduction}
Let $(\Omega,\mathcal{F},\mathbb{P})$ be a probability space and let
$\{L_t\}_{t\geq0}$ be a stochastic process on
$(\Omega,\mathcal{F},\mathbb{P})$ taking values in $\R^{d}$,
$d\geq1$. The process $\{L_t\}_{t\geq0}$ is called a \emph{L\'evy
process} if $L_0=0$ $\mathbb{P}$-a.s.,  if it has stationary and
independent increments and if it has c\`adl\`ag paths
$\mathbb{P}$-a.s. (that is, if its trajectories are right-continuous
with left limits $\mathbb{P}$-a.s.). Having these properties,  every
L\'{e}vy process can be completely and uniquely characterized
through the characteristic function of a single random variable
$L_t$, $t>0$, that is, by the famous \emph{L\'evy-Khintchine
formula} we have
$$\mathbb{E}[\exp\{i\langle\xi,L_t\rangle\}]=\exp\{-t\psi(\xi)\},\quad  t\geq0,$$
where
$$\psi(\xi)=i\langle\xi,b\rangle+\frac{1}{2}\langle\xi,c\xi\rangle+\int_{\R^{d}}\left(1-\exp\{i\langle\xi,y\rangle\}+i\langle\xi,y\rangle1_{\{|y|\leq1\}}(y)\right)\nu(dy).$$
Here $b$ is a vector in $\R^{d}$, $c$ is a symmetric
nonnegative-definite $d\times d$ matrix and $\nu(dy)$ is a
$\sigma$-finite Borel measure on $\R^{d}$ satisfying
$$\nu(\{0\})=0\quad\textrm{and}\quad\int_{\R^{d}}\min\{1,y^{2}\}\nu(dy)<\infty.$$
The measure $\nu(dy)$, the triplet $(b, c, \nu)$ and the function
$\psi(\xi)$ are called the \emph{L\'evy measure}, the \emph{L\'evy
triplet} and the \emph{characteristic exponent} of the L\'evy
process $\{L_t\}_{t\geq0}$, respectively. Further, recall that the
vector $b$, the matrix $c$ and the L\'evy measure $\nu(dy)$
correspond  to the deterministic part (shift), the continuous
(Brownian) part and the jumping part of the L\'evy process
$\{L_t\}_{t\geq0}$, respectively.

In this paper, we consider  the transience and recurrence property
of L\'{e}vy processes. A L\'evy process $\{L_t\}_{t\geq0}$ is said
to be \emph{transient} if
$$\mathbb{P}\left(\lim_{t\longrightarrow\infty}|L_t|=\infty\right)=1,$$
and \emph{recurrent} if
$$\mathbb{P}\left(\liminf_{t\longrightarrow\infty}|L_t|=0\right)=1.$$
It is well known that every L\'{e}vy process is either transient or
recurrent (see \cite[Theorem 35.3]{sato-book}). An equivalent
definition (characterization) of the transience and recurrence
property of L\'{e}vy processes can be given through the sojourn
times. A L\'evy process $\{L_t\}_{t\geq0}$ is transient if and only
if
$$\mathbb{E}\left[\int_0^{\infty}1_{\{|L_t|<a\}}(L_t)dt\right]<\infty\quad\textrm{for all}\ a>0.$$
Similarly,  a L\'evy process $\{L_t\}_{t\geq0}$ is recurrent if and
only if
$$\mathbb{E}\left[\int_0^{\infty}1_{\{|L_t|<a\}}(L_t)dt\right]=\infty\quad\textrm{for all}\ a>0$$
 (see \cite[Theorem
35.4]{sato-book}). The above characterizations of the transience and
recurrence property   are not applicable in most cases. A more
operable
 characterization, by using the nice analytical characterization of L\'{e}vy processes through the L\'evy-Khintchine formula, has been given by the
 well-known
\emph{Chung-Fuchs criterion}. A L\'evy process $\{L_t\}_{t\geq0}$ is
transient if and only if
$$\int_{\{|\xi|<a\}}\rm{Re}\left(\frac{1}{\psi(\xi)}\right)\it{d}\xi<\infty
\quad \textrm{for some}\ a>\rm{0}$$ (see \cite[Corollary 37.6 and
Remark 37.7]{sato-book}). Again, in many situations this criterion
is also not applicable. More precisely, for a given L\'evy triplet
$(b,c,\nu)$ it is not always easy to compute the integral part of
the characteristic exponent as well as the integral appearing in the
Chung-Fuchs criterion. According to this, the aim of this paper is
to derive a sufficient condition for transience for L\'evy processes
in terms of the L\'evy triplet. Let us remark that analogous
definitions and characterizations of the transience and recurrence
property hold also for random walks (see \cite[Chapter
4]{durrett-book}). Recall that a \emph{random walk} is a stochastic
process $\{S_n\}_{n\geq0}$ defined on a probability space
$(\Omega,\mathcal{F},\mathbb{P})$ taking values in $\R^{d}$,
$d\geq1$, defined by $S_0:=0$ and $S_n:=\sum_{i=1}^{n}J_i$, where
 $\{J_n\}_{n\geq1}$ is a sequence of i.i.d. random variables called the \emph{jumps} of $\{S_n\}_{n\geq0}$.

As already mentioned, in this paper we consider  the one-dimensional
symmetric case only. Note that, according to \cite[Theorem
37.8]{sato-book} and \cite[Theorem 4.2.13]{durrett-book}, the
limitation to the one-dimensional case is not   too big restriction
since it is well known that every $d$-dimensional, $d\geq3$,
L\'{e}vy process and random walk are transient. Further, recall that
a stochastic process $\{X_t\}_{t\in\mathbb{T}}$ is \emph{symmetric}
if $\{X_t\}_{t\in\mathbb{T}}\stackrel{\hbox{\scriptsize
d}}{=}\{-X_t\}_{t\in\mathbb{T}}$, where $\mathbb{T}=[0,\infty)$ or
$\{0,1,2,\ldots\}$ and $\stackrel{\hbox{\scriptsize d}}{=}$ means
that the processes $\{X_t\}_{t\in\mathbb{T}}$ and
$\{-X_t\}_{t\in\mathbb{T}}$
 have the same
finite-dimensional distributions. In the L\'evy process case,
$\{L_t\}_{t\geq0}$ is symmetric if and only if $b=0$ and $\nu(dy)$
is a symmetric measure, that is, $\nu(B)=\nu(-B)$ holds for all
Borel sets $B\subseteq\R^{d}$ (see \cite[Exercise 18.1]{sato-book}),
while a random walk  is symmetric if and only if its jumps have a
symmetric distribution.
 Now, let us state
the main results of this paper.
\begin{theorem}\label{tm1.1} Let $\{L_t\}_{t\geq0}$ be a one-dimensional symmetric L\'evy process
with the L\'evy measure  $\nu(dy)=f(y)dy$ or $\nu(\{n\})=p_n$, where
the  density function $f(y)$ is such that  $f(y)>0$  a.e. and the
sequence $\{p_n\}_{n\geq1}$ is such that $p_n>0$ for all $n\geq1$.
Then, $\{L_t\}_{t\geq0}$ is transient if
\begin{align}\label{eq:1.1}\int_1^{\infty}\frac{dy}{y^{3}f(y)}<\infty\quad\textrm{or}\quad \sum_{n=1}^{\infty}\frac{1}{n^{3}p_n}<\infty.\end{align}
\end{theorem}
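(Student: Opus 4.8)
The plan is to apply the Chung--Fuchs criterion, so the whole problem reduces to estimating the characteristic exponent $\psi(\xi)$ near the origin and showing that $\int_{\{|\xi|<a\}}\mathrm{Re}(1/\psi(\xi))\,d\xi<\infty$ for some small $a>0$. Since the process is symmetric, $b=0$ and $\nu$ is symmetric, so the imaginary part of $\psi$ vanishes and the linear compensator term cancels against its mirror image; concretely, I expect
\begin{align}\label{eq:planpsi}
\psi(\xi)=\frac12 c\,\xi^2+\int_{\R}\bigl(1-\cos(\xi y)\bigr)\nu(dy)\geq0,
\end{align}
which is real and nonnegative. Because $\mathrm{Re}(1/\psi)=1/\psi$ here, the integrability condition becomes $\int_{\{|\xi|<a\}}d\xi/\psi(\xi)<\infty$, and since the integrand is singular only where $\psi$ is small (near $\xi=0$), everything hinges on a good \emph{lower} bound on $\psi(\xi)$ as $\xi\to0$.

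First I would obtain such a lower bound for the absolutely continuous case $\nu(dy)=f(y)\,dy$. Using $1-\cos(\xi y)\geq c_0 (\xi y)^2$ on the range $|\xi y|\leq 1$ (for a suitable constant $c_0$, e.g. taking $1-\cos u\ge u^2/4$ on $[-1,1]$), I restrict the integral in \eqref{eq:planpsi} to $|y|\le 1/|\xi|$ and get, for small $\xi>0$,
\begin{align}\label{eq:planlb}
\psi(\xi)\;\geq\;c_0\,\xi^2\int_{1\le |y|\le 1/\xi} y^2 f(y)\,dy\;=:\;c_0\,\xi^2\,g(1/\xi),
\end{align}
where $g(R):=\int_{1\le|y|\le R}y^2 f(y)\,dy$ is increasing in $R$. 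The task then is to convert the summability hypothesis $\int_1^\infty dy/(y^3 f(y))<\infty$ into the statement that $\int_0^a d\xi/(\xi^2 g(1/\xi))<\infty$, i.e.\ after the substitution $R=1/\xi$ that $\int_{1/a}^\infty dR/g(R)<\infty$. The bridge between $\int_1^\infty dy/(y^3f(y))<\infty$ and $\int^\infty dR/g(R)<\infty$ is the analytic heart of the argument: by a Cauchy--Schwarz (or Chebyshev-integral) inequality applied on each dyadic block $[2^k,2^{k+1}]$, one controls $1/g(R)$ in terms of $\int y^{-3}f(y)^{-1}\,dy$ over a comparable range, and then sums the resulting series.

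The hardest step is precisely this passage from the hypothesis on $1/(y^3 f)$ to the convergence of $\int dR/g(R)$, because $f$ is an arbitrary positive density and can oscillate wildly; I cannot assume any regular variation or monotonicity. The clean way to handle it is to work dyadically: on the block $A_k=[2^k,2^{k+1}]$ apply Cauchy--Schwarz in the form
\begin{align}\label{eq:planCS}
\left(\int_{A_k}dy\right)^2\;\leq\;\left(\int_{A_k}y^2 f(y)\,dy\right)\left(\int_{A_k}\frac{dy}{y^2 f(y)}\right),
\end{align}
which relates the mass of $y^2f$ on $A_k$ (a lower bound for the increment of $g$) to $\int_{A_k}dy/(y^2 f)$, and the latter is comparable to $\int_{A_k}dy/(y^3 f)$ up to the factor $2^k$; balancing these estimates block by block and summing yields the desired finiteness of $\int dR/g(R)$. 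Finally, the discrete case $\nu(\{n\})=p_n$ is handled completely in parallel: one derives the analogue of \eqref{eq:planlb} with $\psi(\xi)\gtrsim \xi^2\sum_{1\le n\le 1/\xi}n^2 p_n$, replaces the integral Cauchy--Schwarz \eqref{eq:planCS} by its discrete counterpart on dyadic index blocks, and reduces the transience criterion to the stated sum condition $\sum n^{-3}p_n^{-1}<\infty$; I would most likely present the continuous argument in full and then indicate that the discrete one is identical with integrals replaced by sums.
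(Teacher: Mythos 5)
Your proposal is correct, but it takes a genuinely different route from the paper. The paper never touches the Chung--Fuchs criterion: it first proves the discrete random-walk case by exhibiting an explicit unit flow of finite energy on the network $(\ZZ,\{p_{|u-v|}\})$, spread over the dyadic blocks $B_i=\{2^{i-1},\dots,2^i-1\}$, and invoking Lyons' flow criterion; it then transfers the result to continuous jumps by discretizing the walk, proving Skorohod convergence of the Poissonized walks via semimartingale characteristics, showing that transience/recurrence is preserved along this approximation, and comparing the two hypotheses by Jensen's inequality; finally it reduces the L\'evy case to the compound Poisson case via Sato's Theorem 38.2. Your argument works directly at the level of the characteristic exponent: the lower bound $\psi(\xi)\geq c_0\,\xi^2\int_{1\leq|y|\leq1/\xi}y^2f(y)\,dy$ is standard, and the key dyadic Cauchy--Schwarz step does close the gap --- writing $a_k=\int_{2^k}^{2^{k+1}}dy/(y^3f(y))$ one gets $\int_{2^k}^{2^{k+1}}y^2f(y)\,dy\geq 2^{k-1}/a_k$, hence $\int_{2^{k+1}}^{2^{k+2}}dR/g(R)\leq 4a_k$, and summing gives $\int^\infty dR/g(R)\leq 4\int_1^\infty dy/(y^3f(y))<\infty$, exactly what Chung--Fuchs requires after the substitution $R=1/\xi$; the discrete case is identical with sums. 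What each approach buys: yours is shorter, self-contained, covers the L\'evy and random-walk cases in one stroke (using the random-walk analogue of Chung--Fuchs), and refutes the paper's own motivation that the Chung--Fuchs integral is ``not applicable'' here --- one does not need to compute it, only to bound it, and your Cauchy--Schwarz on dyadic blocks is morally the same inequality that bounds the energy of the paper's flow (the two are dual certificates for the same quantity). The paper's route, while longer, yields the network machinery and the approximation lemma (Proposition 3.4) as reusable byproducts, and the explicit flow gives a combinatorial ``reason'' for transience that the analytic bound hides. One small point to make explicit when you write it up: you need $\psi(\xi)>0$ for $0<|\xi|<a$ so that $1/\psi$ is finite a.e. on the domain of integration; this follows from $f>0$ a.e. (and in the lattice case from choosing $a<2\pi$).
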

By using \cite[Theorem 38.2]{sato-book}, the above transience
condition can be generalized to the general symmetric case.
\begin{theorem}\label{tm1.2} Let $\{L^{1}_t\}_{t\geq0}$ and $\{L^{2}_t\}_{t\geq0}$ be  one-dimensional symmetric L\'evy
processes with the L\'evy measures  $\nu_1(dy)$ and $\nu_2(dy)$.
Further, let  $\nu_1(dy)$ be  as in Theorem \ref{tm1.1} and let it
satisfy the condition  (\ref{eq:1.1}). If
$$\int_0^{\infty}y^{2}|\nu_1-\nu_2|(dy)<\infty,$$  then the transience property of $\{L^{1}_t\}_{t\geq0}$ implies
the transience property of $\{L^{2}_t\}_{t\geq0}$. Here $|\cdot|$
denotes the total variation norm on the space of signed
measures.\end{theorem}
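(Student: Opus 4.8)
The plan is to run everything through the Chung--Fuchs criterion, just as the transience of $L^1$ is certified by Theorem \ref{tm1.1}. Since both processes are symmetric we have zero drifts and symmetric L\'evy measures, so the characteristic exponents are real and nonnegative,
$$\psi_i(\xi)=\frac{c_i}{2}\xi^2+\int_{\R}\bigl(1-\cos(\xi y)\bigr)\nu_i(dy)\ge0,\qquad i=1,2,$$
where $c_i\ge0$ are the Gaussian coefficients, and $\mathrm{Re}(1/\psi_i)=1/\psi_i$. Because $\nu_1$ satisfies (\ref{eq:1.1}), Theorem \ref{tm1.1} gives that $L^1$ is transient, so the Chung--Fuchs criterion yields $\int_{\{|\xi|<a\}}\psi_1(\xi)^{-1}d\xi<\infty$ for some $a>0$; the goal is to deduce the same for $\psi_2$. (This comparison is exactly the content of \cite[Theorem 38.2]{sato-book}; below I indicate how it may be obtained directly.)

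First I would control the difference of the two exponents. Writing
$$\psi_1(\xi)-\psi_2(\xi)=\frac{c_1-c_2}{2}\xi^2+\int_{\R}\bigl(1-\cos(\xi y)\bigr)(\nu_1-\nu_2)(dy),$$
and using $|1-\cos(\xi y)|\le\tfrac12\xi^2y^2$ together with the symmetry of $\nu_1-\nu_2$ and the hypothesis $\int_0^\infty y^2|\nu_1-\nu_2|(dy)<\infty$, the integral is bounded in modulus by $\tfrac12\xi^2\int_{\R}y^2|\nu_1-\nu_2|(dy)$. Hence there is $C>0$ with $|\psi_1(\xi)-\psi_2(\xi)|\le C\xi^2$ for all $\xi$. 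The role of the moment hypothesis is precisely to make the perturbation of the exponent of order $\xi^2$, i.e.\ of lower order than $\psi_1$ near the origin.

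Next I would establish a matching lower bound $\psi_2(\xi)\ge c_*\xi^2$ for small $\xi$. Since (\ref{eq:1.1}) forces $\int_{\R}y^2\nu_1(dy)=\infty$ (by Cauchy--Schwarz, in both the density and the discrete case), the moment hypothesis forces $\int_{\R}y^2\nu_2(dy)=\infty$ as well, so $\nu_2\ne0$; fixing $R$ with $\int_{\{|y|\le R\}}y^2\nu_2(dy)>0$ and using $1-\cos u\ge\tfrac14u^2$ for $|u|\le2$, I obtain $\psi_2(\xi)\ge\tfrac14\xi^2\int_{\{|y|\le R\}}y^2\nu_2(dy)=:c_*\xi^2$ whenever $|\xi|\le2/R$. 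Shrinking $a$ so that $a\le2/R$, I then split $\{|\xi|<a\}$ into $B^c=\{\psi_1>2C\xi^2\}$ and $B=\{\psi_1\le2C\xi^2\}$. On $B^c$ the difference bound gives $\psi_2\ge\psi_1-C\xi^2\ge\tfrac12\psi_1$, so $\int_{B^c}\psi_2^{-1}\le2\int\psi_1^{-1}<\infty$. On $B$ one has $\psi_1^{-1}\ge(2C\xi^2)^{-1}$, whence $\int_B\xi^{-2}d\xi\le2C\int_B\psi_1^{-1}d\xi<\infty$, and the lower bound then yields $\int_B\psi_2^{-1}\le c_*^{-1}\int_B\xi^{-2}d\xi<\infty$. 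Adding the two contributions shows $\int_{\{|\xi|<a\}}\psi_2^{-1}<\infty$, i.e.\ $L^2$ is transient.

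The main obstacle is the set $B$ where $\psi_1$ is itself only of order $\xi^2$: there the $O(\xi^2)$ perturbation can swamp $\psi_1$, so the naive pointwise comparison $\psi_2\asymp\psi_1$ breaks down and $1/\psi_2$ cannot be dominated by $1/\psi_1$. The resolution is the independent quadratic lower bound $\psi_2\gtrsim\xi^2$, which, combined with the fact that the integrability of $\psi_1^{-1}$ forces $\xi^{-2}$ to be integrable over the (necessarily thin) set $B$, rescues the estimate; it is in this last step that the transience of $L^1$ is genuinely used.
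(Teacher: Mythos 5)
Your argument is correct and follows essentially the same route as the paper: Theorem \ref{tm1.2} is obtained there by a direct appeal to \cite[Theorem 38.2]{sato-book}, which is precisely the comparison you identify and then reprove via the Chung--Fuchs criterion. Your self-contained verification checks out --- the $O(\xi^2)$ bound on $\psi_1-\psi_2$ from the second-moment hypothesis, the lower bound $\psi_2(\xi)\geq c_*\xi^2$ near the origin (using that (\ref{eq:1.1}) forces $\int y^2\nu_1(dy)=\infty$ and hence $\nu_2\neq0$), and the splitting of $\{|\xi|<a\}$ according to whether $\psi_1\leq 2C\xi^2$ --- so nothing is missing.
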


Let us remark that the same transience condition  holds also in the
case of  one-dimensional symmetric random walks.  More precisely,
let $\{S_n\}_{n\geq0}$ be a one-dimensional  symmetric random walk
with jumps $\mathbb{P}(J_1\in dy)=f(y)dy$ or
$\mathbb{P}(J_1=n)=p_n$, where $f(y)>0$ a.e. and $p_n>0$ for all
$n\geq1$, then the condition (\ref{eq:1.1}) implies the transience
property of $\{S_n\}_{n\geq0}$. Also, let us  remark that, according
to \cite[Theorem 38.2]{sato-book} or \cite[Lemma 1.2]{shepp1}, the
assumptions $f(y)>0$ a.e. and $p_n>0$ for all $n\geq1$ can be
relaxed. More precisely, it suffices to demand positivity of $f(y)$
and $p_n$ on the complement of a compact set.

A simple application of the condition (\ref{eq:1.1}) is in the class
of stable processes. Recall that a one-dimensional symmetric stable
L\'evy process or a random walk is given by the characteristic
exponent $\psi(\xi)=\gamma|\xi|^{\alpha}$ or by the L\'evy triplet
$(0,c,\nu),$ where $\gamma>0$, $\alpha\in(0,2]$,
$$c=\left\{\begin{array}{cc}
                                                      0, & \alpha<2 \\
                                                      2\gamma, &
                                                      \alpha=2
                                                    \end{array}\right.\quad\textrm{and}\quad \nu(dy)=\left\{\begin{array}{cc}
                                                      \gamma\frac{\alpha
                 2^{\alpha-1}\Gamma(\frac{\alpha+1}{2})}{\pi^{\frac{1}{2}}\Gamma(1-\frac{\alpha}{2})}|y|^{-\alpha-1}dy , & \alpha<2 \\
                                                      0, &
                                                      \alpha=2.
                                                    \end{array}\right.$$ It is well known, as a consequence of the Chung-Fuchs criterion, that this process is transient if and only if $\alpha<1.$
Further, recall that a probability density function of a
one-dimensional symmetric stable distribution behaves like
$c_{\alpha}|y|^{-\alpha-1}$ when $|y|\longrightarrow\infty$, for
$\alpha\in(0,2)$ and $$c_\alpha=\left\{\begin{array}{cc}
                                                      \frac{\gamma}{2}, & \alpha=1 \\
                                                      \frac{\gamma}{\pi}\Gamma(\alpha+1)\sin\left(\frac{\pi\alpha}{2}\right),&
                                                      \alpha\neq1
                                                    \end{array}\right.$$(see \cite[Remark 14.18]{sato-book}). Now, as a
simple consequence of Theorem \ref{tm1.1}, we get a new proof for
the transience property of one-dimensional symmetric stable L\'evy
processes and random walks.
\begin{corollary}
A one-dimensional symmetric stable L\'evy process or a random walk
is transient if $\alpha<1.$
\end{corollary}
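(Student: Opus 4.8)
The plan is to verify that the integral criterion (\ref{eq:1.1}) of Theorem \ref{tm1.1} holds precisely when $\alpha<1$, and then to invoke that theorem. For the L\'evy process case the L\'evy measure is given explicitly in the triplet above, so I would write $f(y)=C_\alpha|y|^{-\alpha-1}$, where $C_\alpha=\gamma\alpha 2^{\alpha-1}\Gamma(\frac{\alpha+1}{2})/(\pi^{1/2}\Gamma(1-\frac{\alpha}{2}))>0$. Since this $f$ is continuous and strictly positive on $\R$, the standing hypotheses of Theorem \ref{tm1.1} are automatically met, and it only remains to examine the convergence of the integral.

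First I would substitute this density into (\ref{eq:1.1}):
\be
\int_1^\infty \frac{dy}{y^3 f(y)}=\frac{1}{C_\alpha}\int_1^\infty y^{\alpha-2}\,dy.
\ee
This elementary integral converges if and only if $\alpha-2<-1$, that is, if and only if $\alpha<1$. Hence for $\alpha<1$ the condition (\ref{eq:1.1}) is satisfied, and Theorem \ref{tm1.1} yields transience of the one-dimensional symmetric stable L\'evy process.

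For the random walk I would proceed analogously, but now the role of $f$ is played by the density of the symmetric $\alpha$-stable distribution, which is not available in closed form. Here I would use the quoted tail asymptotics $f(y)\sim c_\alpha|y|^{-\alpha-1}$ as $|y|\to\infty$, together with the fact that stable densities are continuous and strictly positive on all of $\R$. Strict positivity on $[1,\infty)$ guarantees that the integrand $1/(y^3f(y))$ is finite on every compact subinterval, so the convergence of $\int_1^\infty dy/(y^3f(y))$ is decided entirely by its behaviour near infinity. On the tail the integrand behaves like $c_\alpha^{-1}y^{\alpha-2}$, and a limit-comparison argument shows that the integral converges if and only if $\alpha<1$, exactly as in the L\'evy process case; Theorem \ref{tm1.1} then gives transience.

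The corollary is thus essentially a one-line computation once Theorem \ref{tm1.1} is in hand, and I do not expect any serious obstacle. The only point requiring mild care is the random walk case, where one must confirm that the purely asymptotic information $f(y)\sim c_\alpha|y|^{-\alpha-1}$ suffices to determine the convergence of $\int_1^\infty dy/(y^3f(y))$. This follows from the limit-comparison test, using the continuity and strict positivity of the stable density to control the integrand away from infinity.
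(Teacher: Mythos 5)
Your proposal is correct and is exactly the computation the paper intends: substitute the stable L\'evy density $C_\alpha|y|^{-\alpha-1}$ (respectively the stable jump density with tail $\sim c_\alpha|y|^{-\alpha-1}$) into condition (\ref{eq:1.1}) and observe that $\int_1^\infty y^{\alpha-2}\,dy<\infty$ precisely when $\alpha<1$. The care you take with the random walk case (continuity and strict positivity of the stable density plus limit comparison at infinity) matches the role of the quoted asymptotics from \cite[Remark 14.18]{sato-book} in the paper's setup.
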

Note that the above corollary implies that the function
$y\longmapsto y^{3}$, appearing in the condition (\ref{eq:1.1}), is
optimal in the class of power functions.

The transience and recurrence property of one-dimensional symmetric
L\'evy processes  in terms of the L\'evy triplet has already been
studied in the literature. Namely, in \cite[Theorem 38.3]{sato-book}
(see also \cite[Theorem 5]{shepp1})  it has been proved that  a
one-dimensional symmetric L\'evy process $\{L_t\}_{t\geq0}$ with the
L\'evy measure $\nu(dy)$ is recurrent if
\begin{align}\label{eq:1.2}\int_1^{\infty}\left(\int_0^{y}z\nu\left(\max\{1,z\},\infty\right)dz\right)^{-1}dy=\infty.\end{align}
Intuitively, the condition (\ref{eq:1.2}) measures the speed of
divergence of the second moment of  $\nu(dy)$, and, regarding  this
speed, it concludes the recurrence property. Clearly, if $\nu(dy)$
has finite second moment, then $\{L_t\}_{t\geq0}$ is recurrent.
Thus, if the second moment of $\nu(dy)$ diverges slow enough, then
$\{L_t\}_{t\geq0}$ is recurrent. Similarly, the condition
(\ref{eq:1.1}) measures the speed of divergence of the third moment
of $\nu(dy)$. If the third moment of  $\nu(dy)$ diverges fast
enough, then $\{L_t\}_{t\geq0}$ is transient.

Recall that a symmetric Borel measure $\rho(dy)$ on $\R$ is
\emph{unimodal} if it is finite outside of any neighborhood of the
origin and if $x\longmapsto\rho(x,\infty)$ is a convex function on
$(0,\infty)$. Equivalently, a symmetric Borel measure $\rho(dy)$ on
$\R$ is unimodal if it is of the form
$\rho(dy)=a\delta_0(dy)+f(y)dy,$ where $0\leq a\leq\infty$ and the
density function $f(y)$ is  symmetric, decreasing on $(0,\infty)$
and it satisfies $\int_{|y|>\varepsilon}f(y)dy<\infty$ for all
$\varepsilon>0$ (see \cite[Chapter 5]{sato-book}). Note that
measures with a discrete support are never unimodal. Now,  if the
L\'evy measure $\nu(dy)$ is additionally unimodal, the condition
(\ref{eq:1.2}) is also necessary for the recurrence property (see
\cite[Theorem 38.3]{sato-book} or \cite[Theorem 5]{shepp1}). Also,
note that unimodality of the  $\nu(dy)$ and finiteness of
(\ref{eq:1.2}) imply that $f(y)>0$ a.e., and the condition
(\ref{eq:1.2}) is stronger than the condition (\ref{eq:1.1}) (see
Section 4 for the proof). Thus, the condition (\ref{eq:1.1}) is a
generalization of the condition (\ref{eq:1.2}) in the case when the
jumping measure is not unimodal.  For the necessity of unimodality
for the characterization of the transience property  by the
condition (\ref{eq:1.2}) see \cite[Theorems 38.2, 38.3 and 38.4 and
Lemma 38.8]{sato-book} or \cite[Theorems 4 and 5]{shepp1} and
\cite[Theorem 1]{shepp2}.

Finally, we give an example where the
 condition (\ref{eq:1.1}) is more suitable than the Chung-Fuchs criterion and the condition (\ref{eq:1.2}).
 We consider an example of a L\'evy process with ``multiple indices of
 stability".
Let $\{L_t\}_{t\geq0}$ be a one-dimensional symmetric L\'evy process
with the L\'evy measure $\nu(\{n\})=p_n$,  where
$p_{2n}=(2n)^{-\alpha-1}$ and $p_{2n-1}=(2n-1)^{-\beta-1}$ for
$n\geq1$ and $\alpha,\beta\in(0,\infty)$. For a continuous version
of such  process it suffices to interpolate the points
$\{(i,p_i):i\in\ZZ\}$. Now, clearly, if $\alpha<1$ and $\beta<1$,
then the condition (\ref{eq:1.1}) implies transience of
$\{L_t\}_{t\geq0}$. On the other hand, since $\nu(dy)$ is not
unimodal, the condition (\ref{eq:1.2}) is not applicable, and the
application of the Chung-Fuchs criterion leads to a non-trivial
computation. Also, let us  remark that the same example shows that
the condition (\ref{eq:1.1}) is only sufficient for transience.
Indeed, assume that $\alpha<1$ and $\beta\geq1$. Then, since
$\beta\geq1$, the condition (\ref{eq:1.1}) fails to hold. On the
other hand, since $\alpha<1$, $\{L_t\}_{t\geq0}$ is transient (see
Section 4 for the proof).

Now, we explain our strategy of  proving Theorem \ref{tm1.1}. The
proof is divided in three steps. In the first step, by using
electrical networks techniques, we prove Theorem \ref{tm1.1} in the
case of a random walk with discrete jumps. In the second step, we
prove Theorem \ref{tm1.1} in the case of a random walk
$\{S_n\}_{n\geq0}$ with continuous jumps $\mathbb{P}(J_1\in
dy)=f(y)dy$. More precisely, for $\delta>0$ we define a
discretization of $\{S_n\}_{n\geq0}$ as a random walk
$\{S^{\delta}_n\}_{n\geq0}$ on $\delta\ZZ$ with the jump
distribution $\mathbb{P}(J_1^{\delta}=\delta n):=\int_{\delta
n-\frac{\delta}{2}}^{\delta n+\frac{\delta}{2}}f(y)dy,$ $n\in\ZZ.$
Next, by an ``approximation approach" we prove that all the random
walks $\{S^{\delta}_n\}_{n\geq0}$, $\delta>0,$  are either transient
or recurrent at the same time and their transience and recurrence
property is equivalent with the transience and recurrence property
of $\{S_n\}_{n\geq0}$. Finally, by using the first step, we prove
that the condition (\ref{eq:1.1}) for $\{S_n\}_{n\geq0}$ implies the
transience property of $\{S^{1}_n\}_{n\geq0}$. And this accomplishes
the proof of the second step. At the end, in the last step, we
consider the case of L\'evy processes. By using \cite[Theorem
38.2]{sato-book}, it suffices to consider the situation of a
compound Poisson process. Now, the proof  follows from the first and
second step. This accomplishes the proof of Theorem \ref{tm1.1}.

The paper is organized as follows.  In Section 2, we give a proof of
Theorem \ref{tm1.1} for the case of discrete jumps. In Section 3, by
using the results from Section 2, we  proof  Theorem \ref{tm1.1} for
the case of continuous jumps. Finally, in Section 4, we discuss some
properties of the condition (\ref{eq:1.1}).

\section{Discrete case}
In this section, we prove the main step of the proof of Theorem
\ref{tm1.1}. More precisely,  we derive a sufficient condition for
transience for one-dimensional symmetric random walks on $\ZZ$.
\begin{theorem}\label{tm2.1}
Let $\{S_n\}_{n\geq0}$ be a one-dimensional symmetric random walk on
$\ZZ$ with jumps $\mathbb{P}(J_1=n)=p_n$, where the sequence
$\{p_n\}_{n\geq1}$ is such that $p_n>0$ for all $n\geq1$. Then the
random walk $\{S_n\}_{n\geq0}$ is transient if
\begin{align}\label{eq:2.1}\displaystyle\sum_{n\geq
1}\frac{1}{n^{3}p_n}<\infty.\end{align}
\end{theorem}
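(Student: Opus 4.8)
The plan is to realize the random walk as a reversible Markov chain and translate transience into a statement about an associated electrical network. Since the jump law is symmetric, $p_{-n}=p_n$, the transition probabilities $P(i,j)=p_{|i-j|}$ are reversible with respect to the counting measure on $\ZZ$; hence I would regard $\ZZ$ as the vertex set of a network in which the edge $\{i,j\}$ carries the conductance $c(\{i,j\})=p_{|i-j|}$. The total conductance at each vertex is $\sum_{k\ge1}2p_k\le1$ (it is bounded by the total mass of the jump law), so each vertex has finite total conductance and the electrical formalism applies. The analytic tool I rely on is the standard flow criterion: the chain is transient if and only if the effective resistance from $0$ to $\infty$ is finite, equivalently if and only if there exists a unit flow $\theta$ from $0$ to infinity with finite energy $\mathcal{E}(\theta)=\sum_{e}\theta(e)^2/c(e)<\infty$. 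Thus the whole task reduces to exhibiting one such flow and bounding its energy by a constant multiple of $\sum_{n\ge1}1/(n^3p_n)$.

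First I would exploit symmetry together with the fact that any unit flow supported on a subfamily of edges is also a unit flow in the full network with identical energy (so routing is unconstrained, and Rayleigh monotonicity guarantees no penalty): it suffices to construct a finite-energy unit flow toward $+\infty$, the flow toward $-\infty$ being its mirror image. The flow I would build is hierarchical (dyadic): on the spatial scale $2^k$ the unit current is pushed across the gap separating $[0,2^k)$ from $[2^k,\infty)$ by spreading it over the edges of length $\ell\le 2^k$ that cross this gap. The combinatorial point that produces the exponent $3$ is that exactly $\ell$ edges of a given length $\ell$ cross any single integer cut (the pairs $\{i,i+\ell\}$ with $i$ ranging over a window of size $\ell$), so if each length-$\ell$ edge carries current of order $\ell^{-2}$, then the total current at that scale is of order $\sum_{\ell\le 2^k}\ell\cdot\ell^{-2}\asymp\log 2^k\asymp k$, while the energy spent on the length-$\ell$ edges is of order $\ell\cdot(\ell^{-2})^2/p_\ell=\ell^{-3}/p_\ell$. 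After normalising the current at scale $k$ to $1$ by dividing by the harmonic factor of order $k$, the energy of the $k$-th scale is of order $k^{-2}\sum_{\ell\le2^k}\ell^{-3}p_\ell^{-1}\le k^{-2}\sum_{n\ge1}n^{-3}p_n^{-1}$; summing the convergent series $\sum_k k^{-2}$ then bounds the total energy by a constant times $\sum_{n\ge1}1/(n^3p_n)$, which is finite by hypothesis. This is exactly where the weight $n^3$ in (\ref{eq:2.1}) enters, and it also explains why the method is lossy for unimodal measures, where the sharp quantity is instead the truncated second moment of $\nu$ from (\ref{eq:1.2}).

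The step I expect to be the main obstacle is turning this scale-by-scale heuristic into a single, globally consistent flow — one obeying Kirchhoff's node law at every vertex while retaining the energy bound above. The difficulty is that a single edge $\{i,i+\ell\}$ straddles $\ell$ consecutive integer cuts, so the per-cut current assignments cannot be chosen independently; they must be stitched into one edge-indexed function $\theta$. To handle this cleanly I would fix a genuine hierarchical routing scheme — a dyadic nesting of blocks in which current is progressively aggregated from finer to coarser scales — and define $\theta$ directly on edges, then verify conservation at each vertex and re-derive the per-scale estimate $\mathcal{E}_k(\theta)\lesssim k^{-2}S$ with $S=\sum_{n}1/(n^3 p_n)$. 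Once the finite-energy flow is in hand, the flow criterion gives transience toward $+\infty$; the symmetric construction and the standard gluing of the two half-line flows (each carrying half the current) yield finiteness of the effective resistance from $0$ to the full point at infinity, completing the proof of Theorem \ref{tm2.1}.
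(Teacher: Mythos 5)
Your overall strategy coincides with the paper's: realize $\{S_n\}_{n\geq0}$ as the random walk on the network with vertex set $\ZZ$ and conductances $c(e_{uv})=p_{|u-v|}$, invoke the flow criterion (transience iff there is a unit flow of finite energy from some vertex to $\infty$), and build a dyadic flow whose energy is dominated by $\sum_{n\geq1} n^{-3}p_n^{-1}$. The problem is that the proposal never actually produces the flow: you correctly identify the construction of a single edge-indexed, conservative $\theta$ as ``the main obstacle'' and then defer it, and the per-scale heuristic offered in its place does not survive being made precise. Concretely: (i) your energy accounting counts only the $\ell$ edges of length $\ell$ crossing one distinguished cut per scale, but a genuine flow must also route current between the endpoints of those edges and the departure points of the next scale's edges (an unaccounted cost), or else use all $\approx 2^k$ cuts inside the block $[2^k,2^{k+1})$, in which case the number of active length-$\ell$ edges is $\approx 2^k$ rather than $\ell$ and the per-scale energy acquires a factor $2^k$ that destroys the bound; (ii) the normalization ``divide by the harmonic factor of order $k$'' makes the intended current on an edge depend on its scale, yet a length-$\ell$ edge straddles $\ell$ consecutive cuts with different normalizations, so the per-cut assignments cannot be read off as one function $\theta$ --- exactly the stitching problem you flag and do not resolve.

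The paper closes this gap with an explicit flow that needs no logarithmic normalization and whose current depends on position rather than on edge length: partition $\ZZ$ into $B_0=\{0\}$, $B_i=\{2^{i-1},\dots,2^i-1\}$ and mirror images, set $\theta(0,\pm1)=\tfrac12$, and send current $2^{-2|i|}$ along \emph{every} edge from each $u\in B_i$ to each $v\in B_{i+1}$. Conservation at $u\in B_i$ is immediate ($|B_{i-1}|\,2^{-2(i-1)}=|B_{i+1}|\,2^{-2i}=2^{-i}$ in and out), the current across each inter-block cut is $|B_i||B_{i+1}|2^{-2i}=\tfrac12$ with no $\log$ factor, and since an edge of length $w$ carries nonzero current only when its left endpoint satisfies $u\gtrsim w/3$, with $\theta^2(u,u+w)\leq 16u^{-4}$, the total energy of the length-$w$ edges is $O(w^{-3}/p_w)$; summing over $w$ gives the claimed bound with explicit constants. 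Until such a flow is written down and its conservation and energy are verified, the argument is a plausible plan rather than a proof, and the quantitative sketch meant to guide the construction ($\ell^{-2}$ per length-$\ell$ edge, $k^{-2}S$ per scale) is not the estimate the correct construction actually yields.
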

Note that the same transience condition also holds in the case of a
one-dimensional symmetric L\'evy process $\{L_t\}_{t\geq0}$ with a
discrete supported L\'evy measure $\nu(\{n\})=p_n$, where $p_n>0$
for all $n\geq1.$ Indeed, first note that
$$\{L_t\}_{t\geq0}\stackrel{\hbox{\scriptsize
d}}{=}\{S_{P_t}\}_{t\geq0},$$ where $\{S_n\}_{n\geq0}$ is a random
walk with jumps $\mathbb{P}(J_1=n):=\frac{1}{\nu(\ZZ)}p_n$ and
$\{P_t\}_{t\geq0}$ is the Poisson process with parameter $\nu(\ZZ)$
independent of $\{S_n\}_{n\geq0}$. Now, the desired result follows
from the definition of transience in terms of sojourn times.

The proof of Theorem \ref{tm2.1} is based on  techniques and results
from electrical networks. Let us introduce some notation we need. A
\emph{graph} is a pair $G=(V(G), E(G))$ where $V(G)$ is a set of
\emph{vertices} and $E(G)$ is a symmetric subset of $V(G)\times
V(G)$, called the \emph{edge set}. By symmetry we mean that
$(u,v)\in E(G)$ if and only if $(v,u)\in E(G)$.  For two vertices
$u,v\in V(G)$ such that $(u,v)\in E(G)$, we say that $u$ and $v$ are
\emph{adjacent} and write $u\sim v$ and by $e_{uv}$ we denote the
edge which connects them. A \emph{path} in a graph is a sequence of
vertices where each successive pair of vertices is an edge in the
graph. A graph is \emph{connected} if there is a path from any of
its vertices to any other. A \emph{network} is a pair $N=(G,c)$,
where $G$ is a connected graph and $c$ is a function
$c:E(G)\longrightarrow[0,\infty)$ called \emph{conductance}. In the
sequel we assume that a network $N$ satisfies
$$c(u):=\displaystyle\sum_{v\sim u}c(e_{uv})<\infty$$ for all
$u\in V(G).$ A \emph{random walk} on a  network $N$ is a
time-homogeneous Markov chain $\{X_n\}_{n\geq0}$ with the state
space $V(G)$ and transition kernel
$$q_{uv}:=\mathbb{P}(X_1=v|X_0=u)=\left\{\begin{array}{cc}
                                                      \frac{c(e_{uv})}{c(u)}, & u\sim v \\
                                                      0, &
                                                      \textrm{otherwise}.
                                                    \end{array}\right.$$
Note that the Markov chain $\{X_n\}_{n\geq0}$ is irreducible (that
is, $\sum_{n=1}^{\infty}\mathbb{P}(X_n=v|X_0=u)>0$ for all $u,v\in
V(G)$) and it is reversible (that is, there exists a nontrivial
measure $\pi(dy)$ on $V(G)$, such that $\pi(u)q_{uv}=\pi(v)q_{vu}$
for all $u,v\in V(G)$). Indeed, irreducibility easily follows from
connectedness of the graph $G$ and for the reversibility measure we
can take $\pi:=c.$ Also, let us  remark that to every irreducible
and reversible time-homogeneous Markov chain on a discrete state
space $S$ given by the transition kernel $q_{uv}$, $u,v\in S$, and a
reversibility measure $\pi(dy)$ we can join a network $N=(G,c)$.
Indeed, put $V(G)=S$, the vertices $u$ and $v$ are adjacent if
$q_{uv}>0$, the graph $G$ is connected because of irreducibility of
the corresponding Markov chain and the conductance is defined by
$c(e_{uv})=\pi(u)q_{uv}$.

Further, let  $u_0\in V(G)$ be an arbitrary vertex of the network
$N$.
 A \emph{flow} from $u_0$ to $\infty$ is a  function
$\theta:V(G)\times V(G)\longrightarrow\R$ such that $\theta(u,v)=0$
unless $u\sim v$, $\theta(u,v)=-\theta(v,u)$ for all $u,v\in V(G)$
and $\displaystyle\sum_{v\in V(G)}\theta(u,v)=0$ if $u\neq u_0$. We
call the flow  a \emph{unit flow} if $\displaystyle\sum_{u\in
V(G)}\theta(u_0,u)=1$. The \emph{energy} of the flow is defined by
$$\mathcal{E}(\theta)=\frac{1}{2}\displaystyle\sum_{u\sim
v}\frac{\theta^{2}(u,v)}{c(e_{uv})}.$$ Next, recall that a state $u$
of a time-homogeneous Markov chain $\{X_n\}_{n\geq0}$ on a discrete
state space $S$ is called \emph{transient} if
$\sum_{n=1}^{\infty}\mathbb{P}(X_n=u|X_0=u)<\infty$ and it is called
\emph{recurrent} if
$\sum_{n=1}^{\infty}\mathbb{P}(X_n=u|X_0=u)=\infty.$ If every state
is transient (resp. recurrent) the chain itself is called transient
(resp. recurrent). It is well known that every irreducible Markov
chain is either recurrent or transient (see \cite[Theorem
8.1.2]{meyn-tweedie-book}). Finally, the main tool for proving
Theorem \ref{tm2.1} is given in the following theorem.

\begin{theorem}\label{tm2.2}\cite[Theorem 1]{lyons}
Random walk on a network $N$ is transient if and only if there is a
unit flow on $N$ of finite energy from some  vertex to $\infty.$
\end{theorem}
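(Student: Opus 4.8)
The plan is to prove the criterion through the electrical interpretation of the random walk, reducing transience to the finiteness of the effective resistance from a fixed source vertex to $\infty$, and then invoking Thomson's variational principle to identify this resistance with a minimal flow energy. Fix $u_0\in V(G)$ and let $G_1\subseteq G_2\subseteq\cdots$ be an exhaustion of $G$ by finite connected subgraphs with $u_0\in G_1$ and $\bigcup_n G_n=G$. For each $n$, form the finite network $N_n$ by collapsing all vertices outside $G_n$ to a single sink vertex $z_n$, retaining the conductances of the boundary edges (summing those that merge). Let $C_n$ be the effective conductance between $u_0$ and $z_n$ in $N_n$ and $R_n=1/C_n$ the effective resistance. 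The standard electrical computation, obtained by solving the Dirichlet problem with boundary values $1$ at $u_0$ and $0$ at $z_n$, shows that the probability that the walk started at $u_0$ leaves $G_n$ before returning to $u_0$ equals $C_n/c(u_0)$. Since the events that the walk leaves $G_n$ before returning to $u_0$ decrease to the event that it never returns to $u_0$, and since an irreducible chain is transient precisely when the no-return probability from $u_0$ is positive, transience is equivalent to $\lim_n C_n>0$, that is, to $R_\infty:=\lim_n R_n<\infty$.

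The bridge to flows is Thomson's principle: for the finite network $N_n$ one has $R_n=\min\{\mathcal{E}(\theta):\theta\ \text{a unit flow from}\ u_0\ \text{to}\ z_n\ \text{in}\ N_n\}$, the minimum being attained by the unit current flow. I would either cite this or derive it quickly, since in the finite case it is a finite-dimensional convex optimization identity dual to the Dirichlet principle. Given it, the direction ``finite-energy flow $\Rightarrow$ transient'' is the easy one: if $\theta$ is a unit flow from $u_0$ to $\infty$ on $N$ with $\mathcal{E}(\theta)<\infty$, then projecting $\theta$ onto $N_n$ — keeping its values on edges inside $G_n$ and routing all flow that exits $G_n$ into $z_n$ — yields a unit flow from $u_0$ to $z_n$ of energy at most $\mathcal{E}(\theta)$ (the interior edges contribute their original energy, and the merged boundary edges contribute no more, by convexity). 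Hence $R_n\leq\mathcal{E}(\theta)$ for all $n$, so $R_\infty<\infty$ and the walk is transient.

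The converse, ``transient $\Rightarrow$ finite-energy flow,'' is where the real work lies and is the step I expect to be the main obstacle. Assume transience, so $R_\infty<\infty$, and let $\theta_n$ be the minimizing unit flow on $N_n$, with $\mathcal{E}(\theta_n)=R_n\le R_\infty$. I would view each $\theta_n$ as an antisymmetric edge function in the Hilbert space with norm $\mathcal{E}(\cdot)^{1/2}$ and inner product weighted by the resistances $1/c(e_{uv})$; the uniform bound $\mathcal{E}(\theta_n)\le R_\infty$ forces $|\theta_n(u,v)|\le\sqrt{R_\infty\,c(e_{uv})}$ on each fixed edge. A diagonal extraction then produces a subsequence along which $\theta_n(u,v)$ converges edgewise to some antisymmetric $\theta(u,v)$. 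Passing to the limit in the conservation equations $\sum_{v}\theta_n(u,v)=0$ for $u\neq u_0$ and in $\sum_v\theta_n(u_0,v)=1$ shows that $\theta$ is a unit flow from $u_0$ to $\infty$, while Fatou's lemma gives $\mathcal{E}(\theta)\le\liminf_n\mathcal{E}(\theta_n)\le R_\infty<\infty$.

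The delicate points are exactly the two limiting steps just invoked. For the node law I must justify that for each fixed $u$ the edgewise convergence suffices: this uses that $u$ has finite degree and that, for $n$ large, $u$ and all its neighbors lie inside $G_n$, so $\sum_v\theta_n(u,v)$ is a finite sum over a fixed neighbor set that converges termwise to $\sum_v\theta(u,v)$. For the energy bound I must apply Fatou over the full infinite edge set, exhausting the edges and using the edgewise lower semicontinuity of each summand $\theta\mapsto\theta^2/c$. Once these are in place, the extracted $\theta$ is the sought unit flow of finite energy from $u_0$ to $\infty$, and combined with the first direction this establishes the stated equivalence.
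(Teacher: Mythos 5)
The paper offers no proof of Theorem \ref{tm2.2} at all: it is quoted directly from T.~Lyons, so your proposal can only be measured against the standard argument, and that is indeed the route you take (exhaustion, escape probability $=C_n/c(u_0)$, transience iff $R_\infty=\lim_n R_n<\infty$, Thomson's principle, contraction of a flow to the collapsed network for one direction, compactness extraction for the other). The architecture is sound, and your ``convexity'' step in the easy direction is correctly the Cauchy--Schwarz inequality $\bigl(\sum_i\theta_i\bigr)^{2}/\sum_i c_i\leq\sum_i\theta_i^{2}/c_i$ applied to merged boundary edges.

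There is, however, a genuine gap in your converse, and it bites precisely in the setting where this paper applies the theorem. You justify passing to the limit in the node law by asserting that $u$ has finite degree and that for large $n$ all neighbors of $u$ lie in $G_n$. The networks of Section 2 are not locally finite: there $V(G)=\ZZ$, $E(G)=\ZZ\times\ZZ$ and $c(e_{uv})=p_{|u-v|}>0$, so every vertex is adjacent to every other vertex and no finite $G_n$ ever contains all neighbors of any $u$; the theorem as stated only assumes $c(u)=\sum_{v\sim u}c(e_{uv})<\infty$. Nor can you substitute dominated convergence, since your edgewise bound $|\theta_n(u,v)|\leq\sqrt{2R_\infty\,c(e_{uv})}$ need not be summable in $v$ (only $\sum_v c(e_{uv})<\infty$ is assumed, not $\sum_v\sqrt{c(e_{uv})}<\infty$). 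The standard repair must be made explicit: given $\varepsilon>0$, choose a finite set $F$ of neighbors of $u$ with $\sum_{v\notin F}c(e_{uv})<\varepsilon$; then, uniformly in $n$, Cauchy--Schwarz gives
$$\Bigl|\sum_{v\notin F}\theta_n(u,v)\Bigr|\leq\Bigl(\sum_{v\notin F}\frac{\theta_n^{2}(u,v)}{c(e_{uv})}\Bigr)^{1/2}\Bigl(\sum_{v\notin F}c(e_{uv})\Bigr)^{1/2}\leq\sqrt{2R_\infty\,\varepsilon},$$
while the finite sum over $F$ converges termwise; equivalently, work with weak convergence in the Hilbert space of antisymmetric edge functions under the energy inner product, where $\theta\longmapsto\sum_v\theta(u,v)$ is a bounded linear functional exactly because $c(u)<\infty$. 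The same tail estimate is also needed to make the node sums and the boundary projection in your easy direction well defined at infinite-degree vertices. With that emendation your proof is complete and coincides with the standard proof of Lyons' criterion.
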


\begin{proof}[Proof of Theorem \ref{tm2.1}]
First, note that, according to \cite[Lemma 1.2]{shepp1}, without
loss of generality we can assume that $p_0>0$. Thus,
$\{S_n\}_{n\geq0}$ is a random walk on the network $N=(G,c)$, where
$G=(V(G),E(G))=(\ZZ,\ZZ\times\ZZ)$ and $c(e_{uv})=p_{|v-u|}.$ Now,
following the ideas from \cite[Theorem 1]{hobert}, we construct a
unit flow from $0$ to $\infty$ for the random walk
$\{S_n\}_{n\geq0}$ such that the corresponding energy is bounded
from above by (\ref{eq:2.1}). Then the desired result follows from
Theorem \ref{tm2.2}.  First, let us partition the set of vertices
$V(G)=\ZZ$ on the sets $B_0=\{0\}$,
$B_i=\{2^{i-1},2^{i-1}+1,\ldots,2^{i}-1\}$ and
$B_{-i}=\{-2^{i}+1,\ldots,-2^{i-1}-1,-2^{i-1}\},$  $i\geq1$, and let
us define a unit flow $\theta:V(G)\times V(G)\longrightarrow\R$ from
$0$ to $\infty$  in the following way. For $u\in B_i$ and $v\in B_j$
define
$$\theta(u,v):=\left\{\begin{array}{cc} \frac{1}{2}, & i=0\ \textrm{and}\ j=-1 \ \textrm{or}\ j=1\\
                                                      0, &  i=j \ \textrm{or} \ |i-j|\geq2  \\
2^{-2|i|}, & 0<i<j=i+1\ \textrm{or}\
                                                      j<j+1=i<0.
                                                    \end{array}\right.$$
Recall that  flow has to be antisymmetric, hence we define
 $\theta(v,u):=-\theta(u,v)$. Next, note that
$$\displaystyle\sum_{v\in\ZZ}\theta(1,v)=\theta(1,0)+\theta(1,2)+\theta(1,3)=-\frac{1}{2}+\frac{1}{4}+\frac{1}{4}=0,$$
and analogously $\displaystyle\sum_{v\in\ZZ}\theta(-1,v)=0.$
Further, for $u\in B_i$,  $i\geq 2$, we have
$$\displaystyle\sum_{v\in\ZZ}\theta(u,v)=\displaystyle\sum_{v\in B_{i-1}}\theta(u,v)+\displaystyle\sum_{v\in
B_{i+1}}\theta(u,v)=-2^{i-2}2^{-2(i-1)}+2^{i}2^{-2i}=0,$$ and
analogously  for  $u\in B_i$, $i\leq-2$, we have
$$\displaystyle\sum_{v\in\ZZ}\theta(u,v)=0.$$
According to this, $\theta$ is a flow from $0$ to $\infty$. Finally,
since
$\displaystyle\sum_{v\in\ZZ}\theta(0,v)=\theta(0,1)+\theta(0,-1)=1$,
$\theta$ is a unit flow from  $0$ to $\infty$. Now, let us prove
that the energy of the flow $\theta$ is bounded from above by
(\ref{eq:2.1}). We have
\begin{align*}\mathcal{E}(\theta)&=\frac{1}{2}\displaystyle\sum_{u\sim
v}\frac{\theta^{2}(u,v)}{c(e_{uv})}=\frac{1}{2}\displaystyle\sum_{(u,v)\in
E(G)}\frac{\theta^{2}(u,v)}{p_{|v-u|}}\\&=
\frac{1}{2}\displaystyle\sum_{u\geq 0,\, v\geq 0,\, u\neq
v}\frac{\theta^{2}(u,v)}{p_{|v-u|}}+\frac{1}{2}\displaystyle\sum_{u\geq
0, \, v<0}\frac{\theta^{2}(u,v)}{p_{|v-u|}}\\&\ \ \ \
+\frac{1}{2}\displaystyle\sum_{u<0, \,
v\geq0}\frac{\theta^{2}(u,v)}{p_{|v-u|}}+\frac{1}{2}\displaystyle\sum_{u<0,\,
v<0,\, u\neq v}\frac{\theta^{2}(u,v)}{p_{|v-u|}}.\end{align*} Note
that, from the symmetry of the distribution  $\{p_n\}_{n\in\ZZ}$ and
the definition of the flow  $\theta$, the second and the third therm
equal $\frac{1}{8p_1}$. Next,  again from the symmetry of the
distribution $\{p_n\}_{n\in\ZZ}$ and the symmetry  of the function
$\theta^{2}$, we have
\begin{align*}\mathcal{E}(\theta)&\leq\frac{1}{4p_1}+\displaystyle\sum_{u\geq
0, \, v\geq
0}\frac{\theta^{2}(u,v)}{p_{|v-u|}}\\&=\frac{1}{4p_1}+\displaystyle\sum_{u\geq
0,\, w\geq0}\frac{\theta^{2}(u,u+w)}{p_w}+\displaystyle\sum_{u\leq0,
\, w\leq
0}\frac{\theta^{2}(u,u+w)}{p_{|w|}}\\&=\frac{1}{4p_1}+2\displaystyle\sum_{u\geq
0, \, w\geq0}\frac{\theta^{2}(u,u+w)}{p_w}.\end{align*} Note that
$\theta(u,u+w)=0$ when $u+w\geq 4u$, except for $u=0$ and $w=1$.
Thus
\begin{align*}\mathcal{E}(\theta)&\leq\frac{3}{4p_1}+2\displaystyle\sum_{w\geq
2, \,
u\geq\lceil\frac{w}{3}\rceil}\frac{\theta^{2}(u,u+w)}{p_w}\\
&=\frac{3}{4p_1}+\frac{1}{8p_2}+2\displaystyle\sum_{w\geq 2, \,
u\geq\lceil\frac{w}{3}\rceil, \,
u\neq1}\frac{\theta^{2}(u,u+w)}{p_w},\end{align*} where $\lceil
x\rceil$ denotes the smallest integer not less than $x$. Now, since
for $u\in B_i$, $i\geq2$, (that is, for $u\geq2$), we have
$\theta^{2}(u,v)\leq(2^{-2(i-1)})^{2}=16(2^{i})^{-4}\leq16u^{-4}$,
then \begin{align*}\displaystyle\sum_{u\geq\lceil\frac{w}{3}\rceil,
\, u\neq1}^{\infty}\theta^{2}(u,u+w)\leq\left\{\begin{array}{cc}
                                                      \displaystyle\int_{\lceil\frac{w}{3}\rceil-1}^{\infty}\frac{16}{x^{4}}dx=\frac{16}{3(\lceil\frac{w}{3}\rceil-1)^{3}}\leq\frac{16}{3(\frac{w}{3}-1)^{3}}=\frac{144}{(w-3)^{3}}, & w\geq4\\
                                                      \displaystyle\int_{1}^{\infty}\frac{16}{x^{4}}dx=\frac{16}{3}, &
                                                      w=2,3.
                                                    \end{array}\right.\end{align*}
This yields
\begin{align*}\mathcal{E}(\theta)&\leq\frac{3}{4p_1}+\frac{1}{8p_2}+2\sum_{w\geq2}\frac{1}{p_w}\displaystyle\sum_{
u\geq\lceil\frac{w}{3}\rceil, \,
u\neq1}\theta^{2}(u,u+w)\\
&\leq\frac{3}{4p_1}+\frac{1}{8p_2}+\frac{32}{3p_2}+\frac{32}{3p_3}+288\displaystyle\sum_{w\geq4}\frac{1}{(w-3)^{3}p_w}.\end{align*}
This accomplishes the proof of Theorem \ref{tm2.1}.
\end{proof}

\section{Continuous case}

In this section, we prove Theorem \ref{tm1.1} in the case of
continuous jumps. As in the case of discrete jumps,  the main step
is to consider the random walk case.
\begin{theorem}\label{tm3.1}Let $\{S_n\}_{n\geq0}$ be one-dimensional symmetric random walk with jumps $\mathbb{P}(J_1\in
dy)=f(y)dy$, where  the probability density function $f(y)$ is such
that $f(y)>0$ a.e. Then the random walk $\{S_n\}_{n\geq0}$ is
transient if
\begin{align}\label{eq:3.1}\int_{1}^{\infty}\frac{dy}{y^{3}f(y)}<\infty.\end{align}
\end{theorem}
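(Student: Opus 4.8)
The plan is to reduce Theorem \ref{tm3.1} to the already-established discrete case (Theorem \ref{tm2.1}) by discretizing the walk, and then to argue that the discretization does not change the transience/recurrence type. First I would fix $\delta>0$ and introduce the lattice walk $\{S_n^{\delta}\}_{n\geq0}$ on $\delta\ZZ$ with step law $\mathbb{P}(J_1^{\delta}=\delta n)=p_n^{\delta}:=\int_{\delta n-\delta/2}^{\delta n+\delta/2}f(y)\,dy$, exactly as announced in the introduction. Since $f$ is symmetric, $\{S_n^{\delta}\}$ is symmetric, and since $f>0$ a.e.\ we have $p_n^{\delta}>0$ for every $n$. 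The linear bijection $x\mapsto x/\delta$ carries $\{S_n^{\delta}\}$ to a symmetric random walk on $\ZZ$ with the same jump probabilities $p_n^{\delta}$, and preserves recurrence and transience, so Theorem \ref{tm2.1} applies to $\{S_n^{\delta}\}$ verbatim.

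The second step is to feed the hypothesis (\ref{eq:3.1}) into the discrete criterion (\ref{eq:2.1}) for the choice $\delta=1$. By the Cauchy--Schwarz inequality, $1=\big(\int_{n-1/2}^{n+1/2}1\,dy\big)^{2}\leq p_n^{1}\int_{n-1/2}^{n+1/2}\frac{dy}{f(y)}$, so $1/p_n^{1}\leq\int_{n-1/2}^{n+1/2}\frac{dy}{f(y)}$. For $n\geq1$ and $y\in[n-1/2,n+1/2]$ one has $y\leq 3n/2$, hence $n^{-3}\leq\frac{27}{8}y^{-3}$ and therefore $\frac{1}{n^{3}p_n^{1}}\leq\frac{27}{8}\int_{n-1/2}^{n+1/2}\frac{dy}{y^{3}f(y)}$. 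Summing over $n\geq2$ gives $\sum_{n\geq2}\frac{1}{n^{3}p_n^{1}}\leq\frac{27}{8}\int_{3/2}^{\infty}\frac{dy}{y^{3}f(y)}<\infty$ by (\ref{eq:3.1}), while the $n=1$ term is finite because $p_1^{1}>0$. Thus (\ref{eq:2.1}) holds for $\{S_n^{1}\}$ and Theorem \ref{tm2.1} yields that $\{S_n^{1}\}$ is transient.

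It remains to transfer transience from $\{S_n^{1}\}$ back to $\{S_n\}$, and this is the step I expect to be the main obstacle. I would in fact prove that $\{S_n\}$ and $\{S_n^{\delta}\}$ have the same type for every $\delta>0$ (so that all the $\{S_n^{\delta}\}$ agree as well). The natural coupling realizes $J_i^{\delta}=q(J_i)$, where $q(y)$ is the nearest multiple of $\delta$ to $y$, so that $|y-q(y)|\leq\delta/2$ and $S_n=S_n^{\delta}+\Sigma_n$ with $\Sigma_n:=\sum_{i=1}^{n}(J_i-J_i^{\delta})$ a mean-zero random walk whose increments are bounded by $\delta/2$. A direct comparison of the sojourn sums $\sum_n\mathbb{P}(|S_n|<a)$ and $\sum_n\mathbb{P}(|S_n^{\delta}|<a)$ through this coupling fails, because the error $\Sigma_n$ typically grows like $\sqrt{n}$ and the two walks drift apart. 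The robust route is to compare the walks at the level of their recurrence criteria: via the sojourn-time characterization it suffices that the two sums converge or diverge together, which I would obtain by showing that the characteristic exponents $1-\widehat{\mu}(\xi)=\int f(y)\bigl(1-\cos\xi y\bigr)\,dy$ and $1-\widehat{\mu^{\delta}}(\xi)=\int f(y)\bigl(1-\cos(\xi\, q(y))\bigr)\,dy$ are comparable up to multiplicative constants for $\xi$ near $0$. This is plausible because rounding changes each phase $\xi y$ by at most $\xi\delta/2$: on the bulk region $|y|\lesssim 1/\xi$ both integrands are $\asymp(\xi y)^{2}$, whereas on the tail the oscillatory contributions of $1-\cos$ are controlled by the same tail mass $\int_{|y|>1/\xi}f$. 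Unlike applying the Chung--Fuchs criterion to $f$ directly, this is a purely structural comparison between a walk and its lattice rounding and needs no explicit evaluation of the exponent.

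Granting this equivalence, the argument closes: hypothesis (\ref{eq:3.1}) forces $\{S_n^{1}\}$ to be transient by the first two steps, and the equivalence then forces $\{S_n\}$ itself to be transient. The one place demanding genuine care is the uniform two-sided bound on the ratio $(1-\widehat{\mu^{\delta}}(\xi))/(1-\widehat{\mu}(\xi))$ as $\xi\to0$, and in particular the handling of the tail region where $1-\cos$ oscillates; this is the technical crux of the continuous case.
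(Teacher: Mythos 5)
Your first two steps are sound and essentially identical to the paper's: the discretization $p_n^{1}=\int_{n-1/2}^{n+1/2}f(y)\,dy$ and the Cauchy--Schwarz estimate $1/p_n^{1}\leq\int_{n-1/2}^{n+1/2}dy/f(y)$ (the paper uses Jensen's inequality; the computation is the same) correctly convert the hypothesis (\ref{eq:3.1}) into the discrete criterion (\ref{eq:2.1}), so Theorem \ref{tm2.1} gives transience of $\{S^{1}_n\}_{n\geq0}$.

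The transfer step back to $\{S_n\}_{n\geq0}$ is a genuine gap: you have correctly located the crux but not closed it. The statement you ask the reader to ``grant'' --- that $1-\widehat{\mu^{\delta}}(\xi)$ and $1-\widehat{\mu}(\xi)$ are comparable up to multiplicative constants near $\xi=0$ --- is the entire difficulty, and it is doubtful as stated. The bulk region $|y|\lesssim 1/|\xi|$ is indeed controllable, but on the tail the only pointwise information is $1-\cos(\xi y)\geq 0$: the bound $1-\widehat{\mu}(\xi)\geq c\,\mu(|y|>1/|\xi|)$ is false pointwise (it only holds after averaging, e.g.\ $\tfrac{1}{T}\int_0^T(1-\widehat{\mu}(u))\,du\geq c\,\mu(|y|>1/T)$), so there is no evident way to dominate the tail contribution of $1-\widehat{\mu^{\delta}}(\xi)$ by $1-\widehat{\mu}(\xi)$ at the same $\xi$. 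This oscillatory-tail phenomenon is precisely what forces the unimodality hypothesis in the Shepp--Sato converse results, and the whole point of Theorem \ref{tm1.1} is to treat non-unimodal measures, so a pointwise two-sided ratio bound should not be expected to come cheaply. Note also that the naive estimate $|\cos(\xi y)-\cos(\xi q(y))|\leq |\xi|\delta/2$ produces an additive error of order $|\xi|$, which swamps the generic lower bound $1-\widehat{\mu}(\xi)\geq C\xi^{2}$, so even one direction of the comparison is not immediate. The paper avoids Fourier analysis here entirely: in Proposition \ref{p3.4} it Poissonizes $\textbf{S}$ and $\textbf{S}^{\delta}$, proves Skorokhod convergence $\bar{\textbf{S}}^{\delta}\rightarrow\bar{\textbf{S}}^{\delta_0}$ and $\bar{\textbf{S}}^{\delta}\rightarrow\bar{\textbf{S}}$ via convergence of semimartingale characteristics, shows the transient-path event $T(B_a)$ is a continuity set for the limit law, and concludes that $\delta\longmapsto\mathbb{P}_{\bar{\textbf{S}}^{\delta}}(T(B_a))$ is a continuous $\{0,1\}$-valued function on $(0,\infty)$, hence constant, with limit $\mathbb{P}_{\bar{\textbf{S}}}(T(B_a))$. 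To complete your argument you must either supply a full proof of the characteristic-exponent comparison (at least in the one direction you actually need) or replace it with an argument of this weak-convergence type.
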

Again, similarly as in the case of discrete jumps, the transience
condition for the L\'evy process case can be easily derived from the
random walk case.
  Indeed, let $\{L_t\}_{t\geq0}$ be a one-dimensional symmetric L\'evy process with the L\'evy measure
$\nu(dy)=f(y)dy$, where the density $f(y)$ is such that $f(y)>0$
a.e. Then, first note that, according to \cite[Theorem
38.2]{sato-book}, without loss of generality we can assume that
$\nu(\R)<\infty.$ Thus,
$$\{L_t\}_{t\geq0}\stackrel{\hbox{\scriptsize
d}}{=}\{S_{P_t}\}_{t\geq0},$$ where $\{S_n\}_{n\geq0}$ is a random
walk with continuous jumps $\mathbb{P}(J_1\in
dy):=\frac{1}{\nu(\R)}f(y)dy$ and $\{P_t\}_{t\geq0}$ is the Poisson
process with parameter $\nu(\R)$ independent of $\{S_n\}_{n\geq0}$.
Now, the desired result follows from the definition of transience in
terms of sojourn times.

Before the proof of Theorem \ref{tm3.1}, we need some auxiliary
results. Let $B\subseteq\R^{d}$ be an arbitrary Borel set and let us
denote by $\mathbb{D}(\R^{d})$  the space of $\R^{d}$-valued
c\`adl\`ag functions equipped with the Skorohod topology. Define the
\emph{set of recurrent paths} by
$$R(B):=\{\omega\in\ \mathbb{D}(\R^{d}):\forall n\in\N,\ \exists t\geq n \ \textrm{such that} \ \omega(t)\in B\},$$ and the \emph{set of transient
paths}  by
$$T(B):=\{\omega\in\ \mathbb{D}(\R^{d}):\exists s\geq0 \ \textrm{such that} \ \omega(t)\not\in B, \ \forall t\geq s
\}.$$  In the following proposition, we characterize the transience
and recurrence property of L\'evy process in terms of c\`adl\`ag
paths.

\begin{proposition}\label{p3.2} Let $\textbf{L}=\{L_t\}_{t\geq0}$ be an $\R^{d}$-valued L\'evy
process. Then, $\textbf{L}$ is transient if and only if
$\mathbb{P}_{\textbf{L}}(T(B_a))=1$ for all $a>0$, and it is
recurrent if and only if $\mathbb{P}_{\textbf{L}}(R(B_a))=1$ for all
$a>0$, where $B_a$ denotes the open ball of radius $a$ around the
origin.
\end{proposition}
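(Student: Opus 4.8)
The plan is to translate the two limit-based definitions of transience and recurrence directly into the pathwise language of the sets $T(B_a)$ and $R(B_a)$, and then to deal with the fact that these definitions involve an intersection over the uncountable index set $\{a>0\}$. Recall that $\mathbb{P}_{\textbf{L}}$ is the law of $\textbf{L}$ on $\mathbb{D}(\R^{d})$, so that transience and recurrence read $\mathbb{P}_{\textbf{L}}(\{\omega:\lim_{t\to\infty}|\omega(t)|=\infty\})=1$ and $\mathbb{P}_{\textbf{L}}(\{\omega:\liminf_{t\to\infty}|\omega(t)|=0\})=1$. The first thing I would record are the elementary identities of subsets of $\mathbb{D}(\R^{d})$
$$\Big\{\omega:\lim_{t\to\infty}|\omega(t)|=\infty\Big\}=\bigcap_{a>0}T(B_a),\qquad \Big\{\omega:\liminf_{t\to\infty}|\omega(t)|=0\Big\}=\bigcap_{a>0}R(B_a).$$
Both follow by unwinding the definitions: $\lim_{t\to\infty}|\omega(t)|=\infty$ says that for every $a>0$ one has $\omega(t)\notin B_a$ for all sufficiently large $t$, which is exactly $\omega\in T(B_a)$; while $\liminf_{t\to\infty}|\omega(t)|=0$ says that for every $a>0$ one has $\omega(t)\in B_a$ at arbitrarily large times, which is exactly $\omega\in R(B_a)$. (Using open balls is harmless here, since both conditions range over all $a>0$.) I would also note the duality $T(B_a)=R(B_a)^{c}$, which makes the two halves of the statement mirror images of each other.

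For the transience characterization the forward implication is immediate from the first identity: if $\textbf{L}$ is transient, then since $\bigcap_{a>0}T(B_a)\subseteq T(B_a)$ we get $\mathbb{P}_{\textbf{L}}(T(B_a))\geq\mathbb{P}_{\textbf{L}}(\bigcap_{a'>0}T(B_{a'}))=1$ for every $a>0$. For the converse I would exploit monotonicity: since $B_{a}\subseteq B_{a'}$ for $a\leq a'$, a path that eventually leaves $B_{a'}$ forever also eventually leaves $B_a$ forever, so $a\mapsto T(B_a)$ is nonincreasing and therefore $\bigcap_{a>0}T(B_a)=\bigcap_{n\in\N}T(B_n)$. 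Hence if $\mathbb{P}_{\textbf{L}}(T(B_a))=1$ for all $a>0$, this countable intersection has probability one, which by the identity above is exactly transience of $\textbf{L}$.

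The recurrence equivalence is entirely parallel: here $a\mapsto R(B_a)$ is nondecreasing, so $\bigcap_{a>0}R(B_a)=\bigcap_{n\in\N}R(B_{1/n})$ is again a countable intersection, and the same two implications run using the second identity. The one genuinely technical point, which I regard as the main obstacle to a clean write-up, is the measurability of $T(B_a)$ and $R(B_a)$ in the Borel $\sigma$-algebra of $\mathbb{D}(\R^{d})$, since each is a priori defined through uncountably many time instants. This is resolved by right-continuity of the c\`adl\`ag paths: for the open ball $B_a$ one has $\bigcup_{t\geq n}\{\omega:\omega(t)\in B_a\}=\bigcup_{q\in\mathbb{Q},\,q\geq n}\{\omega:\omega(q)\in B_a\}$, because if $\omega(t)\in B_a$ for some real $t\geq n$ then, $B_a$ being open, $\omega(q)\in B_a$ for some rational $q>t$. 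Writing $R(B_a)=\bigcap_{n\in\N}\bigcup_{q\in\mathbb{Q},\,q\geq n}\{\omega:\omega(q)\in B_a\}$ exhibits it as measurable, and then $T(B_a)=R(B_a)^{c}$ is measurable too; with measurability established, the monotone-limit and continuity-of-measure steps above go through verbatim.
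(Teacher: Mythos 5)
Your proof is correct and follows the same route as the paper, which simply states that the proposition ``follows directly from the definition'' of transience and recurrence; you have supplied the details the paper leaves implicit, namely the identities $\{\lim_{t\to\infty}|\omega(t)|=\infty\}=\bigcap_{a>0}T(B_a)$ and $\{\liminf_{t\to\infty}|\omega(t)|=0\}=\bigcap_{a>0}R(B_a)$, the reduction of the uncountable intersections to countable ones via monotonicity in $a$, and the measurability of $T(B_a)$ and $R(B_a)$ via right-continuity and rational time points. These additions are all accurate and make the argument complete.
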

\begin{proof} The proof follows directly from the definition of the
transience and recurrence properties.
\end{proof}
Now, let us  recall the notion of characteristics of a
semimartingale (see \cite{jacod-book}).  Let
$(\Omega,\mathcal{F},\process{\mathcal{F}},$
$\mathbb{P},\process{S})$, $\process{S}$ in the sequel, be a
one-dimensional semimartingale and let $h:\R\longrightarrow\R$ be a
truncation function (that is, a continuous bounded function such
that $h(x)=x$ in a neighborhood of the origin).
 We  define two processes
$$\check{S}(h)_t:=\sum_{s\leq t}(\Delta S_s-h(\Delta S_s))\quad
\textrm{and} \quad S(h)_t:=S_t-\check{S}(h)_t,$$ where the process
$\process{\Delta S}$ is defined by $\Delta S_t:=S_t-S_{t-}$ and
$\Delta S_0:=S_0$. The process $\process{S(h)}$ is a special
semimartingale. Hence, it admits the unique decomposition
\begin{align}\label{eq:3.2}S(h)_t=S_0+M(h)_t+B(h)_t,\end{align} where $\process{S(h)}$ is a local
martingale and $\process{S(h)}$ is a predictable process of bounded
variation.

\begin{definition}
Let $\process{S}$  be a semimartingale and let
$h:\R\longrightarrow\R$ be the truncation function. Furthermore, let
$\process{B(h)}$  be the predictable process
 of bounded variation appearing in (\ref{eq:3.2}),  let $N(\omega,ds,dy)$ be the
compensator of the jump measure
$$\mu(\omega,ds,dy)=\sum_{s:\Delta S_s(\omega)\neq 0}\delta_{(s,\Delta S_s(\omega))}(ds,dy)$$ of the process
$\process{S}$ and let $\process{C}$ be the quadratic co-variation
process for $\process{S^{c}}$ (continuous martingale part of
$\process{S}$), that is,
$$C_t=\langle S^{c}_t,S^{c}_t\rangle.$$  Then $(B,C,N)$ is called
the \emph{characteristics} of the semimartingale $\process{S}$
 (relative to $h(x)$). If we put $\tilde{C}(h)_t:=\langle
 M(h)_t,M(h)_t\rangle$, where $\process{M(h)}$ is the local martingale
 appearing in (\ref{eq:3.2}), then $(B,\tilde{C},N)$ is called the \emph{modified
 characteristics} of the semimartingale $\process{S}$ (relative to $h(x)$).
\end{definition}

 \begin{proposition}\label{p3.4} Let   $\textbf{S}=\{S_n\}_{n\geq0}$ be a one-dimensional random
 walk with continuous jumps $\mathbb{P}(J_1\in dy)=f(y)dy$. For $\delta>0$, let
 $\textbf{S}^{\delta}=\{S^{\delta}_n\}_{n\geq0}$ be a random walk on $\delta\ZZ$ with
 discrete jumps
$$\mathbb{P}(J^{\delta}_1=\delta n)=\int_{\delta n-\frac{\delta}{2}}^{\delta
n+\frac{\delta}{2}}f(y)dy,\quad n\in\ZZ.$$ Further, let
$\process{P}$ be the Poisson process with parameter $1$ independent
of $\textbf{S}$ and $\textbf{S}^{\delta}$, $\delta>0$, and let
$\bar{\textbf{S}}:=\{S_{P_t}\}_{t\geq0}$ and
$\bar{\textbf{S}}^{\delta}:=\{S^{\delta}_{P_t}\}_{t\geq0}$. Then
$\bar{\textbf{S}}^{\delta}\stackrel{\hbox{\scriptsize
d}}{\longrightarrow} \bar{\textbf{S}}\ \ \textrm{when}\ \
\delta\longrightarrow 0,$ where $\stackrel{\hbox{\scriptsize
d}}{\longrightarrow}$ denotes the convergence in
$\mathbb{D}(\R^{d})$ with respect to the Skorohod topology, and
 all  the
random walks $\textbf{S}^{\delta}$, $\delta>0$, are either transient
 or  recurrent at the same time and this transience and recurrence dichotomy is equivalent with the transience and recurrence
 dichotomy
of the random walk $\textbf{S}$.

\end{proposition}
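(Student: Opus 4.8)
The plan is to treat $\bar{\textbf{S}}$ and $\bar{\textbf{S}}^{\delta}$ as what they are, namely compound Poisson (hence L\'evy) processes, and to read off everything from their generating triplets. Writing $\nu(\R)<\infty$ for the common total mass, $\bar{\textbf{S}}$ has L\'evy measure $f(y)\,dy$ and $\bar{\textbf{S}}^{\delta}$ has L\'evy measure $\mu^{\delta}:=\sum_{n\in\ZZ}p_n^{\delta}\,\delta_{n\delta}$ with $p_n^{\delta}=\int_{n\delta-\delta/2}^{n\delta+\delta/2}f$. Since $f$ is symmetric, both measures are symmetric and the drift vanishes, so the characteristic exponents are
\[
\psi(\xi)=\int_{\R}(1-\cos\xi y)f(y)\,dy,\qquad \psi^{\delta}(\xi)=\sum_{n\in\ZZ}(1-\cos\xi n\delta)\,p_n^{\delta}.
\]
Grouping the first integral over the intervals $I_n=(n\delta-\delta/2,n\delta+\delta/2)$ and using $|\cos\xi y-\cos\xi n\delta|\le|\xi|\,|y-n\delta|\le|\xi|\delta/2$ gives the uniform bound $|\psi(\xi)-\psi^{\delta}(\xi)|\le|\xi|\delta/2$, so $\psi^{\delta}\to\psi$ pointwise as $\delta\to0$; the same grouping shows $\mu^{\delta}\to f(y)\,dy$ weakly.

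For the weak convergence I would use the functional limit theorem for processes with independent increments (Jacod--Shiryaev), which is exactly why the characteristics were introduced. The characteristics of $\bar{\textbf{S}}^{\delta}$ relative to a truncation $h$ are $B^{\delta}(h)_t=t\int h\,d\mu^{\delta}=0$ (oddness of $h$, symmetry of $\mu^{\delta}$), $C^{\delta}\equiv0$, $N^{\delta}(ds,dy)=ds\,\mu^{\delta}(dy)$, with modified second characteristic $\tilde C^{\delta}(h)_t=t\int h^{2}\,d\mu^{\delta}$; those of $\bar{\textbf{S}}$ are the analogues with $f(y)\,dy$. The weak convergence $\mu^{\delta}\to f(y)\,dy$ together with finiteness of the total mass gives $\int h^{2}\,d\mu^{\delta}\to\int h^{2}f$ and $\int g\,d\mu^{\delta}\to\int gf$ for every bounded continuous $g$ vanishing near the origin, which are precisely the hypotheses of the convergence criterion; hence $\bar{\textbf{S}}^{\delta}\stackrel{d}{\longrightarrow}\bar{\textbf{S}}$ in the Skorohod space. (Alternatively one may couple the processes by driving $\bar{\textbf{S}}^{\delta}$ with the rounded jumps $J_i^{\delta}=\delta\,\mathrm{round}(J_i/\delta)$, which yields $\sup_{t\le T}|\bar S_t-\bar S_t^{\delta}|\le P_T\delta/2\to0$ a.s.\ and the same conclusion.)

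For the transience/recurrence dichotomy I would first remove the Poissonization: exactly as in the remark following Theorem \ref{tm2.1}, the sojourn-time characterization shows that $\textbf{S}^{\delta}$ is transient iff $\bar{\textbf{S}}^{\delta}$ is, and $\textbf{S}$ is transient iff $\bar{\textbf{S}}$ is. It then remains to compare the L\'evy processes $\bar{\textbf{S}}^{\delta}$ and $\bar{\textbf{S}}$. Here I would use Proposition \ref{p3.2}, writing transience as $\mathbb{P}(T(B_a))=1$ and recurrence as $\mathbb{P}(R(B_a))=1$ for all $a>0$, and push these probabilities through the weak convergence with the Portmanteau theorem. Because $T(B_a)$ and $R(B_a)$ are tail events, they are not Skorohod continuity sets, so I would localise them as $T(B_a)=\bigcup_{s}\bigcap_{N}\{\inf_{t\in[s,s+N]}|\omega(t)|\ge a\}$, apply Portmanteau to the finite-window events (choosing $a$ so the level is not charged, possible for a.e.\ $a$), and collapse the resulting inequalities using the $0$--$1$ dichotomy for L\'evy processes; this gives that $\bar{\textbf{S}}^{\delta}$ and $\bar{\textbf{S}}$ agree in type for all small $\delta$. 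To upgrade this to every $\delta>0$ --- which the application (via $\delta=1$) really needs --- I would combine it with a scale-uniform comparison from the Chung--Fuchs criterion: recurrence of $\bar{\textbf{S}}^{\delta}$ (resp.\ $\bar{\textbf{S}}$) is equivalent to non-integrability of $1/\psi^{\delta}$ (resp.\ $1/\psi$) near the origin, and since $\mu^{\delta}$ and $f(y)\,dy$ have comparable truncated second moments and tails for each fixed $\delta$, the standard two-sided estimate for $\int(1-\cos\xi y)\rho(dy)$ yields $\psi^{\delta}(\xi)\asymp\psi(\xi)$ as $\xi\to0$, hence the same type for every $\delta$.

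The main obstacle is precisely this last transfer. Transience and recurrence concern the behaviour of the paths at infinity, so they are invisible to Skorohod convergence, which controls the law only on compact time intervals; the crude coupling bound $P_T\delta/2$ likewise grows in $t$ and cannot control $\liminf_{t\to\infty}|\cdot|$. The work therefore goes into (i) localising the tail events in time well enough that Portmanteau applies and then reconstituting them via the $0$--$1$ law, and (ii) ensuring the equivalence for each fixed $\delta$ and not merely in the limit $\delta\to0$, for which the Fourier comparison of $\psi^{\delta}$ and $\psi$ near the origin is the natural tool.
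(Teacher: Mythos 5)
Your first half --- the Skorohod convergence $\bar{\textbf{S}}^{\delta}\stackrel{\hbox{\scriptsize d}}{\longrightarrow}\bar{\textbf{S}}$ obtained from convergence of the characteristics of the compound Poisson processes --- matches the paper's argument and is fine. The gap is in the transience/recurrence transfer, which is the heart of the proposition. Your localisation-plus-Portmanteau step cannot deliver the claim that $\bar{\textbf{S}}^{\delta}$ and $\bar{\textbf{S}}$ ``agree in type for all small $\delta$'': the type is simply not determined by weak convergence. (Symmetric $\alpha_k$-stable processes with $\alpha_k\uparrow 1$ are all transient yet converge weakly to the recurrent Cauchy process; Brownian motion with drift $1/k$ gives the opposite failure.) Concretely, after writing $T(B_a)=\bigcup_s\bigcap_N A_{s,N}$ with $A_{s,N}=\{\inf_{t\in[s,s+N]}|\omega(t)|\geq a\}$, Portmanteau controls $\mathbb{P}_{\bar{\textbf{S}}^{\delta}}(A_{s,N})$ for each fixed $(s,N)$ only for $\delta$ small depending on $(s,N)$, and nothing in your sketch lets you interchange $\delta\to0$ with $N\to\infty$; invoking the $0$--$1$ law for each fixed process does not repair this, as the counterexamples show. (For comparison, the paper takes a different tack at exactly the point where you hesitate: it argues that $T(B_a)$ and $R(B_a)$ \emph{are} continuity sets for each $\mathbb{P}_{\bar{\textbf{S}}^{\delta_0}}$, via the inclusion $\partial R(B_a)\subseteq R(B_{a+\varepsilon})\setminus R(B_{a-\varepsilon})$ and the transience/recurrence dichotomy, and then uses that $\delta\longmapsto\mathbb{P}_{\bar{\textbf{S}}^{\delta}}(T(B_a))$ is a continuous $\{0,1\}$-valued function on the connected set $(0,\infty)$, hence constant.)

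That leaves your Chung--Fuchs comparison carrying the whole weight; note that if it worked it would apply to every fixed $\delta$ and make the weak-convergence half of your dichotomy argument redundant. But as written it is an assertion, not a proof: the ``standard two-sided estimate'' $\psi(\xi)\asymp\xi^{2}\int_{|y|\leq1/|\xi|}y^{2}\nu(dy)+\nu(|y|>1/|\xi|)$ is only one-sided for general symmetric L\'evy measures --- the lower bound by the tail mass fails because $1-\cos u$ vanishes on $2\pi\ZZ$, so a measure concentrated near multiples of a period defeats it, and only the truncated-second-moment part survives as a lower bound. You would need either the averaged version $\frac{1}{\xi}\int_0^{\xi}\psi(\eta)\,d\eta\asymp G(1/\xi)$ together with an argument that the Chung--Fuchs integral is insensitive to the averaging, or a direct two-sided comparison of $\psi^{\delta}$ and $\psi$ near the origin (the naive bound $|\psi^{\delta}(\xi)-\psi(\xi)|\leq C|\xi|\delta$ is useless here, since an additive error of order $|\xi|$ can change the integrability of $1/\psi$ at $0$). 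Until one of these two mechanisms is actually carried out, the second assertion of the proposition is not proved.
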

\begin{proof}
 Clearly,
 $\bar{\textbf{S}}$ and $\bar{\textbf{S}}^{\delta}$,
$\delta>0$, are processes of bounded variation. Thus, they are
semimartingales. Further, let $h(x)$ be the truncation function and
let $(B,\tilde{C},N)$ and
$(B^{\delta},\tilde{C}^{\delta},N^{\delta})$, $\delta>0$, be the
modified characteristics of $\bar{\textbf{S}}$ and
$\bar{\textbf{S}}^{\delta}$, $\delta>0$, respectively. Now, since
$\bar{\textbf{S}}$ and $\bar{\textbf{S}}^{\delta}$ are L\'evy
processes,  by \cite[Proposition 2.17 and Corollary
II.4.19]{jacod-book}, their (modified) characteristics are exactly
the corresponding L\'evy triplets, that is,
$$B_t=t\mathbb{E}[h(J_1)], \quad
N(ds,dy)=ds\mathbb{P}(J_1\in dy),\quad
\tilde{C}_t=t\mathbb{E}[h^{2}(J_1)]$$ and
$$B^{\delta}_t=t\mathbb{E}[h(J^{\delta}_1)], \quad
N^{\delta}(ds,dy)=ds\mathbb{P}(J^{\delta}_1\in dy),\quad
\tilde{C}^{\delta}_t=t\mathbb{E}[h^{2}(J^{\delta}_1)].$$ According
to this, in order to prove the desired convergence, by \cite[Theorem
VIII.2.17]{jacod-book}, it suffices to show that
$$\sup_{s\leq t}|B^{\delta}_s-B_s|\longrightarrow0,\quad
\tilde{C}^{\delta}_t\longrightarrow
\tilde{C}_t\quad\textrm{and}\quad
\int_{[0,t]\times\R}g(y)N^{\delta}(ds,dy)\longrightarrow
\int_{[0,t]\times\R}g(y)N(ds,dy)$$ when $\delta\longrightarrow0$ for
all $t\geq0$ and for every bounded and continuous function $g(x)$
vanishing in a neighborhood of the origin.  Clearly, in order to
prove the above convergences, it suffices to show that
$$\mathbb{E}[g(J^{\delta}_1)]\longrightarrow \mathbb{E}[g(J_1)]$$
when $\delta\longrightarrow0$ for every bounded and continuous
function $g(x)$. But this fact easily follows from \cite[Theorem
2.1]{billingsley}, definition of the jumps
$\{J_n^{\delta}\}_{n\geq0}$, $\delta>0$, and  continuity of the
jumps $\{J_n\}_{n\geq0}$.

Now, we prove the second part of the proposition. Let $\delta_0>0$
be arbitrary. By
  completely the same arguments as above, we have $\bar{\textbf{S}}^{\delta}\stackrel{\hbox{\scriptsize
d}}{\longrightarrow} \bar{\textbf{S}}^{\delta_0}\ \ \textrm{when}\ \
\delta\longrightarrow \delta_0.$ Next, let $a>0$ be arbitrary, then
 $T(B_a)=R(B_a)^{c}$,
$R(B_a)$ is open  in $\mathbb{D}(\R)$ and $\partial R(B_a)\subseteq
R(B_{a+\varepsilon})\setminus R(B_{a-\varepsilon})$, where $\partial
R(B_a)$ denotes the boundary of the set $R(B_a)$ and
$0<\varepsilon<a$. Thus, by Proposition \ref{p3.2}, we have
$$\mathbb{P}_{\bar{\textbf{S}}^{\delta_0}}(\partial
R(B_a))\leq\mathbb{P}_{\bar{\textbf{S}}^{\delta_0}}(R(B_{a+\varepsilon}))-\mathbb{P}_{\bar{\textbf{S}}^{\delta_0}}(R(B_{a-\varepsilon}))=0$$for
all $a>0.$ Hence, the sets  $T(B_a)$ and $R(B_a)$, $a>0$, are
continuity sets for $\mathbb{P}_{\bar{\textbf{S}}^{\delta_0}}$. Now,
by \cite[Theorem 2.1]{billingsley}, this yields
             $$\lim_{\delta\longrightarrow \delta_0}\mathbb{P}_{\bar{\textbf{S}}^{\delta}}(T(B_a))=\mathbb{P}_{\bar{\textbf{S}}^{\delta_0}}(T(B_a))$$ for all $a>0.$
Hence, for all $a>0$, the function $$\delta\longmapsto
             \mathbb{P}_{\bar{\textbf{S}}^{\delta}}(T(B_a))$$ is continuous on $(0,\infty)$.
             According to this, by   Proposition \ref{p3.2}, $\mathbb{P}_{\bar{\textbf{S}}^{\delta}}(T(B_a))=1$
            for all
             $\delta>0$ and all $a>0$, or $\mathbb{P}_{\bar{\textbf{S}}^{\delta}}(T(B_a))=0$
            for all
             $\delta>0$ and all $a>0$. This means, again by  Proposition \ref{p3.2},
             that all the random walks $\textbf{S}^{\delta}$, $\delta>0$, are either
             transient
             or recurrent at the same time.

            Finally, by completely the same arguments as above, $\mathbb{P}_{\bar{\textbf{S}}}(\partial T(B_a))=0$ for all
            $a>0$. Then, again by   \cite[Theorem 2.1]{billingsley}, we have
             \begin{align*}\lim_{\delta\longrightarrow
             0}\mathbb{P}_{\bar{\textbf{S}}^{\delta}}(T(B_a))=\mathbb{P}_{\bar{\textbf{S}}}(T(B_a))\end{align*}
             for all $a>0$.
             Thus, by Proposition \ref{p3.2}, the transience and recurrence property of the random walks $\textbf{S}^\delta$, $\delta>0$, is equivalent with the transience and recurrence
 property
of the random walk $\textbf{S}$.
\end{proof}

At the end, we prove Theorem \ref{tm3.1}.
\begin{proof}[Proof of Theorem \ref{tm3.1}]
Let $\{S^{1}_n\}_{n\geq0}$ be a random walk on $\ZZ$ with discrete
 jumps
$$\mathbb{P}(J^{1}_1=n):=\int_{n-\frac{1}{2}}^{n+\frac{1}{2}}f(y)dy,\quad
n\in\ZZ.$$ Next, by the Jensen's inequality, we have
\begin{align*}\infty>\int_{\frac{1}{2}}^{\infty}\frac{dy}{y^{3}f(y)}&=\sum_{n=1}^{\infty}\int_{n-\frac{1}{2}}^{n+\frac{1}{2}}\frac{dy}{y^{3}f(y)}\geq
\sum_{n=1}^{\infty}\frac{1}{\left(n+\frac{1}{2}\right)^{3}\int_{n-\frac{1}{2}}^{n+\frac{1}{2}}f(y)dy}=\sum_{n=1}^{\infty}\frac{1}{\left(n+\frac{1}{2}\right)^{3}\mathbb{P}(J^{1}_1=n)}.\end{align*}
Thus, by Theorem \ref{tm2.1}, the random walk $\{S^{1}_n\}_{n\geq0}$
is transient. Now, the desired result follows from  Proposition
\ref{p3.4}.
\end{proof}

\section{Some remarks on the main results}
In this section, we discus some properties of the condition
(\ref{eq:1.1}) we mentioned in Section 1. First, we prove that,
under the assumption of unimodality, the condition (\ref{eq:1.2}) is
stronger than the condition (\ref{eq:1.1}). Recall that a
one-dimensional symmetric L\'evy measure $\nu(dy)$ is unimodal if it
is of the form $\nu(dy)=f(y)dy$, where the density function $f(y)$
is  symmetric, decreasing on $(0,\infty)$ and it satisfies
$\int_{|y|>\varepsilon}f(y)dy<\infty$ for all $\varepsilon>0$.
 Let us fix $y_0>1$, then,
by the Fubini's theorem, for all $y\geq y_0$ we have
\begin{align*}\int_0^{y}z\nu\left(\max\{1,z\},\infty\right)dz&=\int_0^{y}z\int_{\max\{1,z\}}^{\infty}f(u)dudz\\&=\int_0^{1}z\int_{1}^{\infty}f(u)dudz+\int_{1}^{y}z\int_{z}^{\infty}f(u)dudz\\
&=\frac{1}{2}\int_{1}^{\infty}f(u)du+\int_{1}^{y}z\int_{z}^{\infty}f(u)dudz\\
&=\frac{1}{2}\int_1^{y}u^{2}f(u)du+\frac{y^{2}}{2}\int_y^{\infty}f(u)du\\
&\geq\frac{y^{3}-1}{6}f(y)+\frac{y^{2}}{2}\int_y^{\infty}f(u)du\\
&\geq Cy^{3}f(y),
\end{align*}
where in the fifth line we used the fact that $f(y)$ is decreasing
on $(0,\infty)$ and  $0<C<\frac{y_0^{3}-1}{6y_0^{3}}$ is arbitrary.
Now, we have
\begin{align*}&\int_1^{\infty}\left(\int_0^{y}z\nu\left(\max\{1,z\},\infty\right)dz\right)^{-1}dy\\&=\int_1^{y_0}\left(\int_0^{y}z\nu\left(\max\{1,z\},\infty\right)dz\right)^{-1}dy+\int_{y_0}^{\infty}\left(\int_0^{y}z\nu\left(\max\{1,z\},\infty\right)dz\right)^{-1}dy\\
&\leq\frac{2(y_0-1)}{y_0\int_{y_0}^{\infty}f(y)dy}+\frac{1}{C}\int_{y_0}^{\infty}\frac{dy}{y^{3}f(y)}\\
&\leq D\int_1^{\infty}\frac{dy}{y^{3}f(y)},\end{align*} for some
suitably chosen constant $D>0$. Therefore, we have proved the
desired result.

Finally, we prove the transience property of a L\'evy process with
``multiple indices of stability" $\{L_t\}_{t\geq0}$. Recall that
 $\{L_t\}_{t\geq0}$ is a one-dimensional symmetric L\'evy process
with the L\'evy measure $\nu(\{n\})=p_n$,  where
$p_{2n}=(2n)^{-\alpha-1}$ and $p_{2n-1}=(2n-1)^{-\beta-1}$ for
$n\geq1$ and $\alpha,\beta\in(0,\infty)$. We claim that if
$\alpha<1$ and $\beta\geq1$, then $\{L_t\}_{t\geq0}$ is transient.
Clearly, it suffices to consider the random walk case. Let
$\{S_n\}_{n\geq0}$ be a random walk on $\ZZ$ with jumps
$\mathbb{P}(J_1=n)=c^{-1}p_n,$ where $c:=\sum_{n\in\ZZ}p_n$ is the
norming constant and $p_n$, $n\geq1$, are as above. First, let us
define a sequence of stopping times $\{T_n\}_{n\geq0}$ inductively
by $T_0:=0$ and
$$T_n:=\inf\{k>T_{n-1}:S_k\in 2\ZZ\},$$  for $n\geq1,$ and let us prove that $\mathbb{P}(T_n<\infty)=1$ for all $n\geq1$.
We have
\begin{align*}\mathbb{P}(T_1=\infty)&=\mathbb{P}(S_k\in2\ZZ+1 \
\textrm{for all}\ k\geq1)\\&=
\displaystyle\lim_{k\longrightarrow\infty}\mathbb{P}(S_l\in2\ZZ+1 \
\textrm{for all}\ 1\leq l\leq k)\\
&=\mathbb{P}(J_1\in2\ZZ+1)\displaystyle\lim_{k\longrightarrow\infty}(\mathbb{P}(J_1\in2\ZZ))^{k-1}=0.\end{align*}
Now, let us assume that $\mathbb{P}(T_{n-1}<\infty)=1$ and prove
that  $\mathbb{P}(T_n<\infty) = 1,$ $n\geq2$. Denote by
$N:=T_{n-1}$. Then, by the strong Markov property, we have
\begin{align*}\mathbb{P}(T_n<\infty) &=
\mathbb{E}[1_{\{T_n<\infty\}}] =
\mathbb{E}[1_{\{T_1<\infty\}}\circ\theta_N] =
\mathbb{E}[\mathbb{E}[1_{\{T_{1}<\infty\}}\circ\theta_N|\mathfrak{F}_N]]\\
&= \mathbb{E}[\mathbb{E}^{S_N} [1_{\{T_1<\infty\}}]]
=\displaystyle\sum_{i\in\ZZ}\mathbb{E}[1_{\{S_N=2i\}}] =
1,\end{align*} where $\theta_n$, $n\geq0$, are the shift operators
on the canonical state space $\ZZ^{\{0,1,2,\ldots\}}$ defined by
$(\theta_n\omega)(m):=\omega(n+m)$, $m\geq0$, and
$\mathcal{F}_N:=\{A\in\mathcal{F}:A\cap
\{N=n\}\in\sigma\{S_1,\ldots,S_n\} \ \textrm{for all}\ n\geq1\}.$
Thus, the Markov chain $X_n:=S_{T_n}$ is well defined. Clearly,
$\{X_n\}_{n\geq0}$ is irreducible  on $2\ZZ$. Further, note that
$\{X_n\}_{n\geq0}$ and $\{S_n\}_{n\geq0}$ are transient or recurrent
at the same time. Indeed, let us define the following stoping times
$\tau:=\inf\{n\geq1:S_n=0\}$ and
$\tilde{\tau}=\inf\{n\geq1:X_n=0\}$.  We have
$$\mathbb{P}(\tilde{\tau}=\infty)=\mathbb{P}(X_n\neq0 \ \textrm{for all}\  n\geq1)=\mathbb{P}(S_n\neq0\ \textrm{for all}\ n\geq1)=\mathbb{P}(\tau=\infty).$$
Now, the desired result follows from \cite[Propositions 8.1.3 and
8.1.4]{meyn-tweedie-book}. According to this, it suffices to prove
the transience property of $\{X_n\}_{n\geq0}$. Further, note that
$\{X_n\}_{n\geq0}$ is actually a symmetric random walk on $2\ZZ$.
Indeed, for $n\geq0$ and $i,j\in\ZZ$ we have
\begin{align*}&\mathbb{P}(X_{n+1}=2j|X_{n}=2i)\\
&=\mathbb{P}(S_1=2j|S_0=2i)+\displaystyle\sum_{i_1\in\ZZ}\mathbb{P}(S_1=2i_1+1|S_0=2i)\mathbb{P}(S_2=2j|S_1=2i_1+1)+\ldots\\
&=\mathbb{P}(S_1=2j-2i|S_0=0)+\displaystyle\sum_{i_1\in\ZZ}\mathbb{P}(S_1=2i_1-2i+1|S_0=0)\mathbb{P}(S_2=2j-2i|S_1=2i_1-2i+1)+\ldots\\
&=\mathbb{P}(X_{n+1}=2j-2i|X_{n}=0)\\
&=\mathbb{P}(X_1=2j-2i).
\end{align*} Thus, $\{X_n\}_{n\geq0}$ is spatially homogeneous.
 Next, for $n,k\geq0$ and $i,j\in\ZZ$ we have
\begin{align*} \mathbb{P}(X_{n+k}-X_n=2i)&=\displaystyle\sum_{j\in\ZZ}\mathbb{P}(X_{n+k}=2i+2j, \
X_n=2j)\\&=\displaystyle\sum_{j\in\ZZ}\mathbb{P}(X_{n+k}=2i+2j|X_n=2j)\mathbb{P}(X_n=2j)\\&=\mathbb{P}(X_k=2i),\end{align*}
and for $k\geq1$, $n_1,\ldots,n_k\geq0$, $0\leq n_1\leq\ldots\leq
n_k$, and $i_1,\ldots,i_{k-1}\in\ZZ$ we have
\begin{align*}&\mathbb{P}(X_{n_k}-X_{n_{k-1}}=2i_{k-1},\ldots,
X_{n_2}-X_{n_1}=2i_1)\\&=\displaystyle\sum_{j\in\ZZ}\mathbb{P}(X_{n_k}=2i_{k-1}+\ldots+2i_2+2j,\ldots,X_{n_2}=2i_1+2_j,X_{n_1}=2j)\\
&=\displaystyle\sum_{j\in\ZZ}\mathbb{P}(X_{n_k}=2i_{k-1}+\ldots+2i_2+2j|X_{n_{k-1}}=2i_{k-2}+\ldots+2i_2+2j)\cdots\\
&\ \ \ \ \ \ \ \,\
\mathbb{P}(X_{n_2}=2i_1+2j|X_{n_1}=2j)\mathbb{P}(X_{n_1}=2j)\\
&=\mathbb{P}(X_{n_k-n_{k-1}}=2i_{k-1})\cdots\mathbb{P}(X_{n_2-n_1}=2i_1)\\&=\mathbb{P}(X_{n_k}-X_{n_{k-1}}=2i_{k-1})\cdots\mathbb{P}(X_{n_2}-X_{n_1}=2i_1).\end{align*}
Symmetry is trivially satisfied. Thus, the claim follows. Finally,
let us show that the random walk $\{X_n\}_{n\geq0}$ is transient.
For $i\in\ZZ\setminus\{0\}$ we have
\begin{align*}\mathbb{P}(X_1=2i)&=\mathbb{P}(S_1=2i)+\displaystyle\sum_{j\in\ZZ}\mathbb{P}(S_1=2j+1)\mathbb{P}(S_2=2i|S_1=2j+1)+\ldots\\&\geq
\mathbb{P}(S_1=2i)=c^{-1}p_{2i}=c^{-1}|2i|^{-\alpha-1}.\end{align*}
Now, since $\alpha<1$, from the condition (\ref{eq:1.1}) we have
$$\displaystyle\sum_{n=1}^{\infty}\frac{1}{(2n)^{3}\mathbb{P}(X_1=2n)}<\infty.$$
Therefore, we have proved the desired result.

\bibliographystyle{alpha}
\bibliography{References}

\end{document}